\pdfoutput=1
\documentclass[11pt]{amsart}
\usepackage{geometry}             
\geometry{letterpaper}                  
\usepackage[parfill]{parskip}    \usepackage{graphicx,color,amsthm}
\usepackage{amssymb}
\usepackage{epstopdf}
\usepackage{pdfsync}
\usepackage{caption}
\usepackage{tikz-cd}
\usepackage{subcaption}
\usepackage{mathtools}
\DeclareGraphicsRule{.tif}{png}{.png}{`convert #1 `dirname #1`/`basename #1 .tif`.png}

\theoremstyle{plain} 
\newtheorem{thm}{Theorem}

\newtheorem{prop}[thm]{Proposition}

\theoremstyle{definition} 
\newtheorem{defn}{Definition}

\usepackage{fullpage}
\newcommand{\rk}{\text{rk }}

\title{The dihedral genus of a knot}
\author{Patricia Cahn and Alexandra Kjuchukova}

\thanks{This work was supported by NSF-DMS grant 1821212 to P. Cahn and 1821257 to A. Kjuchukova}
\raggedbottom
\begin{document}
\maketitle

\begin{abstract} 
 Let $K\subset S^3$ be a Fox $p$-colored knot and assume $K$ bounds a locally flat surface $S\subset B^4$ over which the given $p$-coloring extends. This coloring of $S$ induces a dihedral branched cover $X\to S^4$. Its branching set is a closed surface embedded in $S^4$ locally flatly away from one singularity whose link is $K$. When $S$ is homotopy ribbon and $X$ a definite four-manifold, a condition relating the signature of $X$ and the Murasugi signature of $K$ guarantees that $S$ in fact realizes the four-genus of $K$.  We exhibit an infinite family of knots $K_m$ with this property, each with a {Fox 3-}colored surface of minimal genus $m$.  As a consequence, we classify the signatures of manifolds $X$ which arise as dihedral covers of $S^4$ in the above sense.\\

\end{abstract}

\section{Introduction}

The Slice-Ribbon Conjecture of Fox~\cite{fox1962problems}  asks whether every smoothly slice knot in $S^3$ bounds a ribbon disk in the four-ball. The analogous question can be asked in the topological category, namely: does every topologically slice knot bound a locally flat homotopy ribbon disk in $B^4$? Recall that a properly embedded surface with boundary $F'\subset B^4$ is {\it homotopy ribbon} if the fundamental group of its complement is generated by meridians of $\partial F'$ in $S^3$. Ribbon disks are easily seen to be homotopy ribbon whereas homotopy ribbon disks need not be smooth. 

For knots of higher genus, the generalized topological Slice-Ribbon Conjecture asks whether the topological four-genus of a knot is always realized by a homotopy ribbon surface in $B^4$. When a knot $K$ admits Fox $p$-colorings, we approach this problem by studying locally flat, oriented surfaces $F'\subset B^4$ with $\partial F'=K$ over which some $p$-coloring of $K$ extends, in the sense defined in Section~\ref{genusdefs}. The minimal genus of such a surface, when one exists, we call the {\it $p$-dihedral genus of $K$}. 

When $K$ is slice and $p$ square-free, it is classically known that the colored surface $F'$ for $K$ can always be chosen to be a disk. This is essentially a consequence of~\cite[Lemma~3]{casson1975gordon}; a detailed explanation can be found in~\cite[Lemma~9]{geske2018signatures}. Put differently, $p$-dihedral genus and classical four-genus coincide for slice knots. Furthermore, the topological Slice-Ribbon Conjecture is true for $p$-colorable slice knots if and only if the minimal $p$-dihedral genus for these knots can always be realized by homotopy ribbon surfaces.  With this in mind, given a square-free integer $p$ and a $p$-colorable knot $K$, we ask:

{\bf Question 1.} Is the (topological) four-genus of $K$ equal to its (topological) $p$-dihedral genus?

{\bf Question 2.} Is the $p$-dihedral genus of $K$ realized by a homotopy ribbon surface? 

When both of these questions are answered in the affirmative for a knot $K$ with respect to some integer $p$, it follows that the topological four-genus and homotopy ribbon genus of $K$ are equal; that is, the generalized topological Slice-Ribbon Conjecture holds for $K$.
If $K$ is not slice, requiring that it satisfies Questions 1 and 2 is a priori a stronger condition than satisfying the generalized Slice-Ribbon Conjecture; however, the advantage of this point of view is that dihedral genus can be studied using dihedral branched covers. 

Specifically, our approach is the following. Start with a branched cover of $f': X'\to B^4$ branched along a locally flat properly embedded surface $F'$ with $\partial F'=K$; that is, $F'$ is a properly embedded topological submanifold of $B^4$. We now construct a new cover $f: X\to S^4$ by taking the cones of $\partial X'$, $S^3$ and the map $f$. The branching set of $f$ is a surface $F$ embedded in $S^4$ locally flatly except for one singular point whose link is $K$. {Depending on the knot $K$ and the map $f$, it may be the case that this construction yields a total space $X$ that is again a manifold. In general, $X$ has one singular (non-manifold) point $\mathfrak{z}$, the pre-image of the singularity on $F$. The link of $\mathfrak{z}$ is the cover $f_|$ of $S^3$ branched along $K$. We will consider the signature of $X$ whether it is a manifold or has an isolated singularity. In the latter case, by the signature of $X$ we mean the Novikov signature of the manifold with boundary obtained by deleting an open neighborhood of $\mathfrak{z}$ in $X$.}

When { $f: X\to S^4$ in the above construction} is a $p$-fold {\it irregular dihedral cover} (Definition~\ref{dih}), an invariant of ($p$-colored) knots, $\Xi_p$, is extracted from this construction. This invariant is our main tool. In a general setting, $\Xi_p$ can be thought of as a defect term in the formula for the signature of a branched cover, resulting from the fact that the branching set is not locally flat. Put differently, the presence of a cone singularity $K$ on the branching set causes the signature of the cover to deviate from the smooth case by a term denoted $\Xi_p(K,\rho)$. This term depends only on the isotopy class of the knot $K$ and its Fox $p$-coloring $\rho$, but not on the locally flat part of the branching set.

Given a dihedral cover  $f: X\to S^4$ whose branching set is orientable with one singularity, we in fact have
\begin{equation}\label{xi-short}
\Xi_p(K,\rho)=-\sigma(X)	
\end{equation}
by~\cite[Theorem~1.4]{kjuchukova2018dihedral}, { when $X$ is a manifold, and 
\begin{equation}\label{novikov-xi}
	\Xi_p(K,\rho)=-\sigma(X', \partial X')
\end{equation}
when $X$ has a singularity~\cite[Theorem~7]{geske2018signatures}. 
In the latter formula, 
$\partial X'$ is the dihedral cover of $K$ induced by $f_|$, and $\sigma(\cdot, \cdot)$ denotes the Novikov signature of a manifold with boundary. Of course, the first formula for $\Xi_p(K,\rho)$ in terms of $X$ is a special case of the second, since the signature of a manifold is unchanged by deleting an open neighborhood of a point.}

{Unless explicitly stated otherwise, we will only consider orientable branching sets. Thus, we} take the above {signature} equation to be the definition of $\Xi_p(K,\rho)$.  In Equation~\ref{xi} we recall an explicit formula~\cite{kjuchukova2018dihedral}  for $\Xi_p$ which does not rely on constructing the cover $X$. We also note that  $\Xi_p(K,\rho)$ can be computed algorithmically from a colored diagram of $K$~\cite{cahnkjuchukova2018computing}. We often suppress notation and write $\Xi_p(K)$ when the choice of coloring is clear, or when a knot admits a unique $p$-coloring (up to permuting the colors). Thus, for a two-bridge knot $K$, we will write simply $\Xi_p(K)$.  
The main result of this paper, Theorem~\ref{main}, obtains a certain genus bound for $K$ from $\Xi_p(K)$.

As implied by the above, the signature defect $\Xi_p(K)$ is defined for a knot $K$ which arises as the only singularity on the branching set (not necessarily orientable) of an irregular dihedral cover~\cite{geske2018signatures}. A knot $K$ is called {\it $p$-admissible over $S^4$}, or simply {\it $p$-admissible}, if there exists a $p$-fold dihedral cover  $f: X\to S^4$ whose branching set is embedded and locally flat except for one singularity whose link is $K$.  If, in addition, the covering space $X$ is a topological manifold, $K$ is called {\it strongly $p$-admissible}. The distinguishing property of {\it strongly} $p$-admissible knots\footnote{Like the invariant $\Xi_p$, the notion of (strong) $p$-admissibility of a knot may depend on the choice of coloring. We do not dwell on this presently since all examples in this paper are two-bridge knots and their colorings are unique.} is that their dihedral covers are $S^3$. Admissibility of knots is studied in~\cite{kjorr2017admissible}.

In {Section ~\ref{genusdefs.sec}}, we put side by side the relevant notions of knot four-genus, recall several definitions, and state our main results: {Theorems~\ref{bound}, ~\ref{example} and ~\ref{sum}}. In {Theorem ~\ref{bound}}, we give a lower bound on the homotopy-ribbon $p$-dihedral genus of a colored knot $K$ in terms of the invariant $\Xi_p(K)$. We also give a sufficient condition for when this bound is sharp.  

In Theorem~\ref{example} and Theorem~\ref{sum}, we construct, for any integer $m\geq 0$, infinite families of knots for which the 3-dihedral genus and the topological four-genus are both equal to $m$. The basis of this construction are the knots $K_m$ pictured in Figure~\ref{knots}.  The various four-genera of these knots are computed with the help of Theorem~\ref{bound}.  In particular, for these knots, the lower bound on genus obtained via branched covers is exact and the generalized topological Slice-Ribbon Conjecture is seen to hold. {The proofs of Theorems 1, 2 and 3 are given in Section ~\ref{proof}.} 
 
The technique we apply is the following.  Given a strongly $p$-admissible knot $K$, one can evaluate $\Xi_p(K)$ by realizing $K$ as the only singularity on the branch surface of a dihedral cover of $S^4$.  Each of the knots $K_m$ arises as the only singularity on the branching set of a 3-fold dihedral cover $$f_m: \#(2m+1)\overline{\mathbb{C}P}^2\rightarrow S^4.$$ The branching set of $f_m$ is the boundary union of the cone on $K_m$ with the surface $F_m'$ realizing the four-genus of $K_m$. We construct these covering maps explicitly using singular triplane diagrams, a technique introduced in~\cite{cahnkjuchukova2017singbranchedcovers}.  Equivalently, we construct a family of covers $\#(2m+1)\mathbb{C}P^2\rightarrow S^4$, again with oriented, connected branching sets, with the mirror images of the knots $K_m$ as singularities. This construction appears in Section ~\ref{trisections}.  As a corollary of this construction, we realize all odd integers as values of $\Xi_3$. {In Theorem ~\ref{range}, we prove that the range of values of $\Xi_3$ on strongly admissible knots is precisely the set of odd integers.}

We work in the topological category, except where explicitly stated otherwise. Throughout, $F$ denotes a closed, connected, oriented surface, and $F'$ a connected, oriented surface with boundary. $D_p$ denotes the dihedral group of order $2p$, and $p$ is always assumed odd. 

{\bf Acknowledgments.} The idea of dihedral genus first appeared in discussion with Kent Orr~\cite{kjorr2017admissible}. The examples used in Theorem~\ref{example} were inspired by joint work with Ryan Blair~\cite{blair2018simply}.

\section{Dihedral four-genus and the {main theorems}}\label{genusdefs.sec}

\subsection{Some old and new notions of knot genus}
\label{genusdefs}

We study the interplay between the following notions of four-genus for a Fox $p$-colorable knot $K\subset S^3$.  Classically, the {\it smooth (resp. topological) four-genus} is the minimum genus of a smooth (resp. locally flat) embedded orientable surface in $B^4$ with boundary $K$.  The {\it smooth (resp. topological) $p$-dihedral genus} of a $p$-colored knot $K$ is, informally, the minimum genus of such a surface $F'$ in $B^4$ over which the $p$-coloring of $K$ extends. Precisely, a given $p$-coloring $\rho$ of $K$ extends over $F'$ if there exists a homomorphism $\bar{\rho}$ which makes the following diagram commute (where $i_\ast$ the map induced by inclusion):

\begin{center}
\begin{tikzcd}
	
		\pi_1(S^3-K)\arrow[r,"i_*"]\arrow[d,"\rho"]&\pi_1(B^4-F')\arrow [ld, dashed, "\bar{\rho}"]\\
		D_{p}
		\end{tikzcd}
		\end{center}

The $p$-dihedral genus above is defined for a knot $K$ with a fixed coloring $\rho$, hence we denote it $g_p(K,\rho)$ in the topological case.  We define the $p$-dihedral genus of a {$p$-colorable} knot $K$ to be the minimum $p$-dihedral genus of $K$ over all $p$-colorings $\rho$ of $K$, and denote this by $g_p(K)$ in the topological case.  Note that not every $p$-colored  knot $K$ admits a surface $F'$ as above. In~\cite{kjorr2017admissible}, we determine a necessary and sufficient condition for the existence of a connected oriented surface that fits into this diagram.  {When there is no surface over which a given coloring $\rho$ of $K$ extends, we define $g_p(K,\rho)$ to be infinite, and similarly for the refined notions of dihedral genus defined below.}

The {\it ribbon genus} of $K$ is the minimum genus of a smooth embedded orientable surface $F'$ in $B^4$ with boundary $K$, such that $F'$ has only local minima and saddles with respect to the radial height function on $B^4$.  The {\it smooth (topological) homotopy ribbon genus} of a knot $K$ is the minimum genus of a smooth (locally flat) embedded orientable surface $F'$ in $B^4$ with boundary $K$ such that $i_\ast: \pi_1(S^3-K)\twoheadrightarrow \pi_1(B^4-F')$, that is, inclusion of the boundary into the surface complement induces a surjection on fundamental groups. Finally, given a $p$-colorable or $p$-colored knot, its {\it ribbon $p$-dihedral genus} or {\it smooth (topological) homotopy ribbon $p$-dihedral genus} are defined in the obvious way.  Observe that all notions of dihedral genus refer to surfaces embedded in the four-ball, even though ``four" is not among the multitude of qualifiers we inevitably use.

As a straight-forward consequence of the definitions, the following inequalities hold among the {\it smooth} four-genera of a knot: 

\begin{center}
\begin{tikzcd}
		\text{four-genus}\arrow[d,"\leq"]\arrow[r,"\leq"]&\text{hom. ribbon genus}\arrow[d,"\leq"]\arrow[r,"\leq"]&\text{ribbon genus}\arrow[d,"\leq"]\\
		p\text{-dihedral genus}\arrow[r,"\leq"]&p\text{-dihedral hom. ribbon genus}\arrow[r,"\leq"]&p\text{-dihedral ribbon genus}\\
		\end{tikzcd}
		\end{center}

Excluding the last column, the inequalities make sense and hold in the topological category too.

\subsection{The Main {Theorems}}\label{mainthm.sec}

Denote by $g_4(K)$ the topological 4-genus of a knot $K$, and by $\mathfrak{g}_{{p}}(K,\rho)$ the topological homotopy-ribbon $p$-dihedral genus of a knot $K$ with coloring $\rho$. Again, the minimum such genus over all colorings $\rho$ of $K$ is $\mathfrak{g}_{{p}}(K)$. Let $\sigma(K)$ be the (Murasugi) signature of the knot $K$. {We relate $\mathfrak{g}_{{p}}(K, \rho)$, $\Xi_p(K, \rho)$ and $\sigma(K)$.} Here, $\Xi_p(K, \rho)$ denotes the invariant discussed in the Introduction; { it is reviewed in more detail in this section and, in particular, we recall that it} can be computed using Equation \ref{xi}. 

\begin{thm}  
\label{bound}\label{main}
(A) {Let $K$ be a $p$-admissible knot with $p$-coloring $\rho$ and denote by $M$ the irregular dihedral cover of $K$ determined by $\rho$. The following inequality holds:}

	\begin{equation}\label{inequality}
	\mathfrak{g}_{{p}}(K, \rho)\geq \dfrac{|\Xi_p(K,\rho)| - \rk H_1(M; \mathbb{Z})}{p-1}-\dfrac{1}{2}
	\end{equation}

(B)	Let $K$ be a $p$-admissible knot and $F'\subset B^4$ a {locally flat} homotopy ribbon oriented surface for $K$ over which a given $p$-coloring $\rho$ of $K$ extends.{ Denote by $c(K)$ the cone on $K$, seen as embedded in $D^4=c(S^3)$.} If the associated singular dihedral cover of $S^4$ branched along $F'\cup_K c(K)$ is a definite manifold, then the inequality~(\ref{inequality}) is sharp. In particular, $F'$ realizes the dihedral genus $\mathfrak{g}_{{p}}(K, \rho)$ of $K$. If, in addition, the equality $$|\sigma(K)| = \frac{2|\Xi_p(K, \rho)|}{p-1} -1$$ holds, then the topological four-genus and the topological homotopy ribbon $p$-dihedral genus of $K$ coincide and equal $\frac{|\sigma(K)|}{2}$, so the generalized topological Slice-Ribbon Conjecture holds for $K$.

\end{thm}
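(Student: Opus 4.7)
The plan is to construct the $p$-fold irregular dihedral cover $\pi\colon X'\to B^4$ branched along a minimal-genus homotopy-ribbon dihedral surface $F'$ for $K$, then bound the Novikov signature $|\sigma(X',\partial X')|=|\Xi_p(K,\rho)|$ by $b_2(X')$, and finally use the Euler characteristic of $X'$ together with the homotopy-ribbon hypothesis to express $b_2(X')$ in terms of $g:=g(F')$ and $\rk H_1(M)$.

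For the Euler characteristic: above a generic point of $F'$ the monodromy of a meridian is a reflection in $D_p$ acting on the $p$ cosets defining the cover by left multiplication, and such a reflection fixes exactly one coset and permutes the remaining $p-1$ cosets in $(p-1)/2$ transpositions, so there are $(p+1)/2$ preimages above each point of $F'$. The map $\tilde F'\to F'$ is thus an unbranched $(p+1)/2$-sheeted cover, giving $\chi(\tilde F')=\tfrac{p+1}{2}(1-2g)$, and combined with $\chi(X'\setminus\tilde F')=p\,\chi(B^4\setminus F')=2pg$ one obtains $\chi(X')=(p-1)g+\tfrac{p+1}{2}$. For the Betti number bounds, the surjection $i_*\colon \pi_1(S^3\setminus K)\twoheadrightarrow \pi_1(B^4\setminus F')$ restricts to a surjection between the subgroups defining $M\setminus\tilde K$ and $X'\setminus\tilde F'$ and, since meridians of $\tilde K$ map to meridians of $\tilde F'$ under inclusion, promotes to $\pi_1(M)\twoheadrightarrow\pi_1(X')$. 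This gives $b_1(X')\le\rk H_1(M)$ and, via the long exact sequence of $(X',\partial X')$ over $\mathbb{Q}$ together with Poincar\'e--Lefschetz duality, $b_3(X')=b_1(X',\partial X')=0$. Plugging into $\chi(X')=1-b_1(X')+b_2(X')-b_3(X')$ yields $b_2(X')\le (p-1)g+\tfrac{p-1}{2}+\rk H_1(M)$, and rearranging $|\Xi_p(K,\rho)|\le b_2(X')$ gives (\ref{inequality}), proving Part (A).

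For Part (B), the hypothesis that the associated singular cover of $S^4$ is a definite \emph{manifold} forces $M=S^3$ (so $\rk H_1(M)=0$) and upgrades the signature bound to an equality $|\Xi_p(K,\rho)|=b_2(X')=(p-1)g+\tfrac{p-1}{2}$. Solving for $g$ shows $F'$ realises the right-hand side of (\ref{inequality}), so the inequality is sharp and $F'$ realises $\mathfrak{g}_p(K,\rho)$. The final conclusion follows by substitution: $|\sigma(K)|=2|\Xi_p(K,\rho)|/(p-1)-1$ rewrites as $\mathfrak{g}_p(K,\rho)=|\sigma(K)|/2$, which combined with the Murasugi bound $g_4(K)\ge|\sigma(K)|/2$ and the definitional $g_4(K)\le\mathfrak{g}_p(K,\rho)$ forces all three quantities to coincide. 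The main obstacle is justifying carefully that the homotopy-ribbon hypothesis really does produce the surjection $\pi_1(M)\twoheadrightarrow\pi_1(X')$ after passing to the irregular dihedral cover and refilling the branching locus; this demands van Kampen-style tracking of meridians, but the remaining Euler-characteristic and duality bookkeeping is routine.
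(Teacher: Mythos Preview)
Your argument is correct and follows essentially the same strategy as the paper's proof: both hinge on the surjection $\pi_1(M)\twoheadrightarrow\pi_1(X')$ obtained from the homotopy-ribbon hypothesis, an Euler-characteristic count for the branched cover, and the bound $|\sigma(X',\partial X')|\le b_2$. The only organizational difference is that the paper cones off $\partial X'$ to form $X=X'\cup c(M)$, proves $\pi_1(X)=1$ via Seifert--van Kampen, and then reads off $\rk H_3(X)=\rk H_1(X')$ from $H_n(X)\cong H_n(X',\partial X')$; you instead work directly with $X'$ and obtain the equivalent information $b_3(X')=0$ from Poincar\'e--Lefschetz duality and the long exact sequence of the pair, which is a slightly leaner bookkeeping of the same content.
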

 
  {\bf Remark.} If $K$ has multiple $p$-colorings, denote by $\text{min}_p(K)$ the minimum value of $$|\Xi_p(K,\rho)|-\rk H_1(M;\mathbb{Z})$$ over all such colorings of $K$. Theorem~\ref{main} implies:
 		 \begin{equation}\label{inequality2}
	\mathfrak{g}_{{p}}(K )\geq \dfrac{\text{min}_p(K)}{p-1}-\dfrac{1}{2}.
	\end{equation}

\begin{thm}\label{example} For every integer $m\geq 0$, there exists a knot $K_{m}$ and corresponding 3-coloring $\rho_{m}$, such that: 
	$$g_4(K_{m})=\mathfrak{g}_{{3}}(K_{m})=\dfrac{|\Xi_3(K_{m}, \rho_{m})|}{2} -\frac{1}{2}=m.$$ 
	That is, the inequality~(\ref{inequality2}) is sharp for these knots and computes their $3$-dihedral genus as well as their topological four-genus. The generalized Slice-Ribbon Conjecture holds for these knots.
\end{thm}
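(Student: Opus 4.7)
The plan is to exhibit the knots $K_m$ from Figure~\ref{knots} together with explicit data that feeds directly into Theorem~\ref{bound}(B), so that both the lower bound from the dihedral signature defect and the upper bound from an explicit homotopy ribbon surface agree and equal $m$.

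First I would produce an explicit locally flat (in fact smooth), connected, oriented surface $F_m' \subset B^4$ of genus $m$ with $\partial F_m' = K_m$, and verify that the given $3$-coloring $\rho_m$ of $K_m$ extends over $F_m'$; the construction is the one outlined in Section~\ref{trisections} using singular triplane diagrams. I would then check directly from the diagram that $F_m'$ is homotopy ribbon, so that $\mathfrak{g}_3(K_m,\rho_m) \leq m$. This yields the trivial upper bound.

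Next, I would construct the associated singular dihedral cover $f_m : X_m \to S^4$ branched along $F_m' \cup_{K_m} c(K_m)$, and identify $X_m$ with $\#(2m+1)\overline{\mathbb{C}P}^2$. This is the substantive content of the construction: the singular triplane diagram of the branch surface is read off, its cover is computed, and one shows that the total space is a smooth four-manifold $\#(2m+1)\overline{\mathbb{C}P}^2$ (in particular $K_m$ is strongly $3$-admissible, so its irregular dihedral cover $M$ is $S^3$ and $\rk H_1(M;\mathbb{Z}) = 0$). From the signature equation~(\ref{xi-short}) this immediately gives $\Xi_3(K_m,\rho_m) = -\sigma(X_m) = 2m+1$. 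Because $X_m$ is definite, Theorem~\ref{bound}(B) applies, making the inequality~(\ref{inequality}) sharp; combined with the upper bound from $F_m'$ this yields
\begin{equation*}
\mathfrak{g}_3(K_m,\rho_m) \;=\; \frac{|\Xi_3(K_m,\rho_m)|}{2} - \frac{1}{2} \;=\; m.
\end{equation*}

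Finally, to conclude $g_4(K_m) = m$ and the generalized topological Slice-Ribbon Conjecture for $K_m$, I would invoke the second statement of Theorem~\ref{bound}(B): I need to verify that the Murasugi signature satisfies $|\sigma(K_m)| = 2m$. Since the $K_m$ are two-bridge (as indicated by the paper's convention of writing $\Xi_p(K)$ for two-bridge knots), $\sigma(K_m)$ can be computed from the continued-fraction data for $K_m$ using standard two-bridge signature formulas; this calculation would be paired with the $\Xi_3$ computation so that both come out of the same combinatorial input. The main obstacle I anticipate is the construction in Section~\ref{trisections}: verifying that the singular triplane diagram really does assemble into a cover whose total space is the definite manifold $\#(2m+1)\overline{\mathbb{C}P}^2$ (and not some other connected sum or a non-manifold), and in particular tracking the orientation so that the signature has the claimed sign. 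Once that identification is in hand, Theorem~\ref{bound} does the remaining work mechanically.
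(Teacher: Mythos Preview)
Your plan is correct and follows essentially the same route as the paper: invoke Proposition~\ref{covers} to obtain the ribbon surface $F_m'$ and the definite cover $\#(2m+1)\overline{\mathbb{C}P}^2$, read off $\Xi_3(K_m)=2m+1$, then apply Theorem~\ref{bound}(B) together with the computation $|\sigma(K_m)|=2m$. The only deviation is in that last step: the paper computes $\sigma(K_m)$ via the Gordon--Litherland formula using a Goeritz matrix (which stays $4\times 4$ for all $m$), whereas you propose using two-bridge continued-fraction signature formulas; either method works, so this is a cosmetic difference rather than a different strategy.
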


\begin{figure}[htbp]
	\includegraphics[width=3.5in]{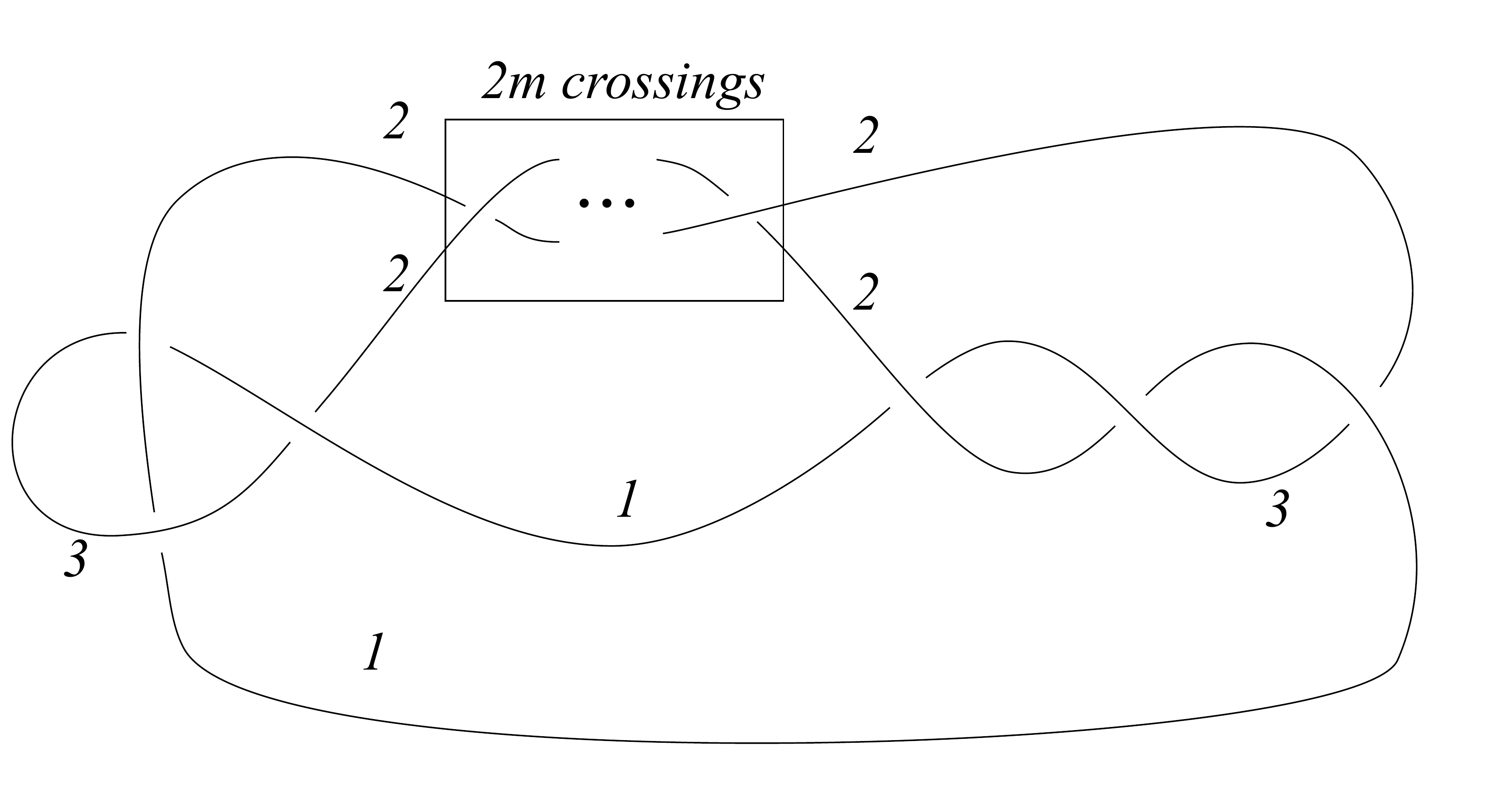}
	\caption{The knot $K_{m}$, $m\geq 0$, and its 3-coloring. We have $K_{0}=6_1$, $K_{1}=8_{11}$, $K_{2}=10_{21}$ and $K_{3}=12a723$.}
	\label{knots}
\end{figure}
\begin{thm}\label{sum}
	For any integer $m\geq 0$, there exist infinite families of knots whose 3-dihedral genus and topological four-genus are both equal to $m$.\end{thm}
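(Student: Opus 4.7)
The plan is, for each $m\geq 0$, to construct the family by taking connected sums of the knot $K_m$ from Theorem~\ref{example} with increasing numbers of copies of a fixed ribbon knot. Let $J$ be any nontrivial ribbon knot (for concreteness, say $J=6_1$), and set $L_i := K_m \# J^{\# i}$ for $i\geq 0$. These knots are pairwise distinct because the Alexander polynomial is multiplicative under connected sum and $\Delta_J\neq 1$.

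I would first verify $g_4(L_i)=m$. Subadditivity of the topological four-genus gives $g_4(L_i)\leq g_4(K_m)+i\cdot g_4(J)=m$. For the reverse inequality, additivity of the Murasugi signature under connected sum and its vanishing on slice knots give $|\sigma(L_i)|=|\sigma(K_m)|=2m$, where the last equality follows from the final conclusion of Theorem~\ref{example}; the classical signature bound then yields $g_4(L_i)\geq m$.

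Finally, I would establish $\mathfrak{g}_3(L_i)=m$, which forces $g_3(L_i)=m$ via the inequality diagram of Section~\ref{genusdefs}. The lower bound $\mathfrak{g}_3(L_i)\geq g_4(L_i)=m$ is immediate from that diagram. For the upper bound, let $F'_m$ be the homotopy ribbon genus-$m$ surface realizing $\mathfrak{g}_3(K_m,\rho_m)$ supplied by Theorem~\ref{example}, and let $D$ be a ribbon disk for $J$; form the boundary connected sum $F':=F'_m\,\natural\, D^{\natural i}$, which is a genus-$m$ surface in $B^4$ bounding $L_i$ and is again homotopy ribbon by a routine Van Kampen argument. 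Extend $\rho_m$ to a 3-coloring $\rho$ of $L_i$ by declaring each $J$-summand monochromatic, with the single color on each $J$-summand chosen to match the color that $\rho_m$ assigns to the meridian at the corresponding connect-sum disk. This $\rho$ extends over $F'$: over $F'_m$ by Theorem~\ref{example}, and over each ribbon disk $D$ via the abelianization $\pi_1(S^3-J)\twoheadrightarrow \mathbb{Z}\to D_3$, which factors through $\pi_1(B^4-D)\to H_1(B^4-D)=\mathbb{Z}$. The only technical points requiring care---the preservation of the homotopy ribbon property under boundary connected sum, and the color-matching at each connect-sum disk---are routine once the monochromatic extension convention on the $J$-summands is fixed, so I expect no substantial obstacle beyond bookkeeping.
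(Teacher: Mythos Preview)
Your proposal is correct and follows essentially the same route as the paper: connect-sum $K_m$ with copies of a nontrivial ribbon knot, extend the coloring trivially over the ribbon disks, and use the boundary connected sum $F'_m\natural D^{\natural i}$ to bound the dihedral genus from above while $g_4$ bounds it from below. The only cosmetic differences are that the paper obtains $g_4(L_i)=m$ via smooth concordance to $K_m$ rather than via subadditivity plus the signature bound, and it invokes the (stronger) ribbon property of $F'_m$ and $D$ directly so that $F'_m\natural D$ is ribbon without a separate Van Kampen check for homotopy ribbon.
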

	
\subsection{Singular dihedral covers of $S^4$ and the invariant $\Xi_p$}
\label{dihedraldefs}

In this section, we revisit the definition of a singular branched cover, and dihedral covers in particular.  We also review the context in which the invariant $\Xi_p$ arises, as well as a couple of techniques for its calculation.

\begin{defn}
\label{dih}
Let $Y$ be a manifold and $B\subset Y$ a codimension-two submanifold with the property that there exists a surjection $\varphi: \pi_1(Y-B) \twoheadrightarrow D_p$. Denote by $\mathring{X}$ the covering space of $Y-B$ corresponding to the conjugacy class of subgroups $\varphi^{-1}(\mathbb{Z}/2\mathbb{Z})$ in $\pi_1(Y-B)$, where $\mathbb{Z}/2\mathbb{Z}\subset D_p$ is any reflection subgroup. The completion of $\mathring{X}$  to a branched cover $f: X\to Y$ is called the {\it irregular dihedral} $p$-fold cover of $Y$ branched along $B$.
\end{defn}

The manifolds whose irregular dihedral covers we will consider are $S^3$, $B^4$ and $S^4$. {The $\Xi_p$ invariant was originally defined in the more general context of a dihedral cover of an arbitrary four-manifold $Y$ with a singularly embedded branching set~\cite{kjuchukova2018dihedral}.}

{Recall the following construction from the introduction.} Let $F'$ be a surface with connected boundary $K$, properly embedded in $B^4$ and locally flat. Given a branched cover of manifolds with boundary $f': X'\to B^4$, one constructs a {\it singular} branched cover of $S^4$ by coning off $\partial X'$, $\partial B^4$ and the map $f'$. The resulting covering map, $f: X\to S^4$, has total space $X:=X'\cup_{\partial X'} c(\partial X')$, where $c(\partial X')$ denotes the cone on $\partial X'$. The branching set is a closed surface $F:=F'\cup_K c(K)$ embedded in $S^4$ with a singularity (the cone point) whose link is $K$. The space $X$ obtained in this way is a manifold if and only if $\partial X'\cong S^3$. 

{Denote by $\sigma(X',\partial X')$ the Novikov signature of the manifold with boundary given as a cover of $B^4$ branched along $F'$. When $\partial X' = S^3$, denote by $\sigma(X)$ the signature of the manifold $X$. In this case, we have $\sigma(X) = \sigma(X', \partial X')$. }

{Given $f': X'\to B^4$ as before with $f'$ an (irregular) dihedral covering map, we always assume that the associated homomorphism $\rho: \pi_1(S^3-K)\to D_p$ is surjective or, equivalently, that $\partial X'$ is connected. In this case, assuming $F'$ is orientable, $\Xi_p(K, \rho)=-\sigma(X', \partial X')$ by~\cite[Theorem~7]{geske2018signatures}. In particular, when $X$ is a manifold, this equation reduces to the earlier result $\Xi_p(K, \rho)=-\sigma(X)$~\cite[Theorem~1.4]{kjuchukova2018dihedral}.}
 
 { Below, we recall two formulas for $\Xi_p(K, \rho)$ from~\cite{kjuchukova2018dihedral}. Equation~\ref{xi} allows for $\Xi_p(K, \rho)$ to be computed in terms of $K$ and its coloring using~\cite{cahnkjuchukova2018computing}  and~\cite{cahnkjuchukova2016linking}. Equation~\ref{xisignature} expresses $\Xi_p(K, \rho)$ in terms of a singular branched cover of $S^4$ in the more general case where the branching set is a possibly non-orientable surface.}

{Refocusing for a moment on the case where the dihedral branched cover $X$ of $S^4$ is a manifold, we note that} there exist many infinite families of knots $K\subset S^3$ whose irregular dihedral covers are homeomorphic to $S^3$. For example, this is a property shared by all $p$-colorable two-bridge knots (a well-known fact recalled in the proof of~\cite[Lemma~3.3]{kjuchukova2018dihedral}). By definition, if a $p$-admissible knot $K$ has $S^3$ as its dihedral cover, then it is in fact strongly $p$-admissible. 
We are then able study invariants of $K$ using four-dimensional techniques such as trisections.   
Criteria for admissibility of singularities are {discussed in more detail} in~\cite{cahnkjuchukova2017singbranchedcovers} where we also use the invariant $\Xi_p(K)$ to give a homotopy ribbon obstruction for {strongly} $p$-admissible knots $K$. 
A generalization of this ribbon obstruction to all $p$-admissible knots appears in~\cite{geske2018signatures}.

We conclude this section by reviewing the formula for computing the invariant $\Xi_p$ given in~\cite{kjuchukova2018dihedral}. Let $p$ be an odd integer and $K$ a $p$-admissible knot. Let $V$ be a Seifert surface for $K$ and $V^\circ$ the interior of $V$. Denote by $\beta\subset V^\circ$ a mod~$p$ characteristic knot\footnote{{Precisely, if $K$ admits multiple $p$-colorings, one must work with a characteristic knot corresponding to the coloring in question. The sense in which a characteristic knot determines a coloring is laid out in~\cite[Proposition~1.1]{CS1984linking}.  The examples we construct always admit a unique $p$-coloring, up to permuting the colors, and therefore a unique equivalence class of mod~$p$ characteristic knots.}} for $K$, as defined in~\cite{CS1984linking}. 
Also denote by $L_V$ the symmetrized linking form for $V$ and by $\sigma_{\zeta^i}$ the Tristram-Levine $\zeta^i$-signature, where $\zeta$ is a primitive $p^{\text{th}}$ root of unity.  Finally, let $W(K,\beta)$ be the cobordism constructed in~\cite{CS1984linking} between the $p$-fold cyclic cover of $S^3$ branched along $\beta$ and the $p$-fold dihedral cover of $S^3$ branched along $K$ and determined by $\rho$. We briefly describe the manifold $W(K,\beta)$.  
Let $\Sigma$ be the $p$-fold cyclic branched cover of $\beta$ and let $\Sigma_p(\beta)\times[0, 1]\to S^3\times[0, 1]$ be the induced cyclic cover branched along $\beta\times[0, 1]$. Letting $\mathbb{Z}/2\mathbb{Z}$ act on an appropriate subset of $\Sigma_p(\beta)\times\{0\}$, one obtains $W(K,\beta)$ as a quotient of $\Sigma_p(\beta)\times[0, 1]$ by this action. One boundary of this quotient, namely $\Sigma_p(\beta)\times\{1\}$, is clearly the $p$-fold cyclic cover of $\beta$. The other boundary component, that is, the image of $\Sigma_p(\beta)\times\{0\}$ under the $\mathbb{Z}/2\mathbb{Z}$ action, is the dihedral cover of $\alpha$ as shown in~\cite[Proposition~1.1]{CS1984linking}.   By~\cite[Theorem~1.4]{kjuchukova2018dihedral}, 

\begin{equation}
\label{xi}
	\Xi_p(K, \rho)=\dfrac{p^2-1}{6p}L_V(\beta, \beta)+\sigma(W(K, \beta))+\sum_{i=1}^{p-1}\sigma_{\zeta^i}(\beta).
\end{equation}

The Novikov signature $\sigma(W(K,\beta))$ can be computed in terms of linking numbers in the dihedral cover of $K$~\cite[Proposition~2.5]{kjuchukova2018dihedral}. Thus, the above formula allows $\Xi_p(K)$ to be evaluated directly from a $p$-colored diagram of $K$, without direct reference to a four-dimensional construction. An explicit algorithm for performing this computation is outlined in~\cite{cahnkjuchukova2018computing}.  Note also that when a knot $K$ is realized as the only singularity on an embedded surface $F\subset S^4$ and moreover this surface is {presented by} a Fox $p$-colored singular triplane diagram, \cite{cahnkjuchukova2017singbranchedcovers} gives a method for computing $\Xi_p(K)$ from this data, via the signature of the associated cover of $S^4$. This technique is reviewed and applied in Section~\ref{trisections} below.\\

We also {review the context in which Equations~\ref{xi-short} and~\ref{novikov-xi} arise, allowing us to relate} $\Xi_p(K,\rho)$ to the signature of a singular branched cover $X$ of $S^4$. Consider an irregular dihedral cover  $f: X\to S^4$ whose branching set $F$ is an embedded surface, {\it not necessarily orientable,}  locally flat away from one singularity $z\in F$ of type $K$. The induced coloring of $F$ is{, as always,} an extension of $\rho$. {Once again we denote by $X'$ the dihedral cover of $B^4$ branched along the complement in $F$ of a neighborhood of the singular point $z$. Note also that $X'$ is obtained by deleting from $X$ a small open neighborhood of $f^{-1}(z)$}. We have
\begin{equation}
\label{xisignature}
\Xi_p(K)=-\dfrac{p-1}{4}e(F)-\sigma(X', \partial X'),
\end{equation}
where $e(F)$ denotes the self-intersection number of $F$. This is a special case of the signature formula for dihedral branched covers over an arbitrary base\footnote{{The reference~\cite{geske2018signatures} is written in the language of intersection homology. In the case of a singular branched cover $f: X\to S^4$, this is equivalent to the Novikov signature $\sigma(X', \partial X')$  since $X$ has only an isolated singularity.}} given in~\cite[Theorem 7]{geske2018signatures}. Note that, when $F$ is orientable {and $X$ a manifold, Equation~\ref{xisignature} reduces to Equation~\ref{xi-short}, that is, $\Xi_p(K)=-\sigma(X)$.  
In this case,  the $\Xi_p$ invariant of a singularity can be understood entirely in terms of the signature of the branched cover and, in particular, can be computed using four-manifold techniques.  We further} note that it is possible to realize {\it all} connected sums $\#n \mathbb{C}P^2$ as 3-fold dihedral covers of $S^4$ with one knot singularity on a connected, embedded branching set, if one allows the branching set to be non-orientable~\cite{blair2018simply}. {By contrast, we see in Theorem~\ref{range} that orientability of the branching set, together with a single singular point, imply that the signature of such a cover is odd. }

\section{Knots with equal topological and dihedral{ genera}}
\label{trisections} 

In this section we construct families of knots for which the topological, ribbon and 3-dihedral genus are equal. We use trisections of four-manifolds \cite{gaykirby2016trisections}, tri-plane diagrams \cite{meierzupan}, and singular tri-plane diagrams \cite{cahnkjuchukova2017singbranchedcovers}, all of which we review informally for the reader's convenience.

Given a smooth, oriented, 4-manifold $X$, a $(g,k_1,k_2,k_3)$-trisection of $X$ is a decomposition of $X=X_1\cup X_2\cup X_3$ into three 4-handlebodies with boundary, such that 
\begin{itemize}
	\item $X_i\cong \natural k_i (B^3\times S^1)$
	\item $X_1\cap X_2\cap X_3 \cong \Sigma_g$ is a closed, oriented surface of genus $g$
	\item $Y_{ij}=\partial (X_i\cup X_j)\cong \# k_l (S^2\times S^1)$ where $i$, $j$ and $l$ $\in \{1,2,3\}$ are distinct
	\item $\Sigma_g\subset Y_{ij}$ is a Heegaard surface for $Y_{ij}$.
\end{itemize}

Every embedded surface $F\subset S^4$ can be described combinatorially by a $(b;c_1,c_2,c_3)$-{\it tri-plane diagram} \cite{meierzupan}.  This is a set of three $b$-strand trivial tangles $(A,B, C)$, such that each boundary union of tangles  $A\cup \overline{B}$,  $B\cup \overline{C}$, and $C\cup \overline{A}$ is a $c_i$-component unlink, for $i=1,2,3$ respectively.  Here $\overline{T}$ denotes the mirror image of $T$.  To obtain $F$ from $(A,B,C)$, one views each of $A\cup \overline{B}$,  $B\cup \overline{C}$, and $C\cup \overline{A}$ as unlinks in bridge position in the spokes $Y_{12}$, $Y_{23}$, and $Y_{31}$ of the standard genus-0 trisection of $S^4$, glues $c_i$ disks to the components of each of these unlinks, and pushes these disks into the $X_i$ to obtain an embedded surface.

The authors introduce {\it singular tri-plane diagrams} and their colorings in \cite{cahnkjuchukova2017singbranchedcovers}.  A $(b; 1,c_2,c_3)$ singular tri-plane diagram is a triple of $b$-strand trivial tangles $(A,B,C)$.  As above $B\cup \overline{C}$  and $C\cup \overline{A}$ are $c_2$- and $c_3$-component unlinks.  $A\cup \overline{B}$ is a knot $K$.  To build a surface with one singularity of type $K$, one again views each of $A\cup \overline{B}$,  $B\cup \overline{C}$, and $C\cup \overline{A}$ in bridge position in the three spokes $Y_{12}$, $Y_{23}$, and $Y_{31}$ of the standard genus-0 trisection of $S^4$ and glues $c_2$ and $c_3$ disks to the components of each of the two unlinks.  Rather than glue disks to $A\cup \overline{B}$, one attaches the cone on $K$.  Note that by interchanging the order of the tangles $A$ and $B$, one obtains a surface with singularity $\overline{K}$, {the mirror of $K$}.

A {\it $p$-colored singular tri-plane diagram} is a singular tri-plane diagram together with an assignment of values in $\{1,2, \dots,p\}$ to the arcs of the diagram, such that on each tangle, the assignment is a Fox $p$-coloring, and such that the colors along the endpoints of each tangle agree.   Such a coloring induces a coloring on the corresponding singular surface.

\begin{figure}[htbp]\includegraphics[width=3.5in]{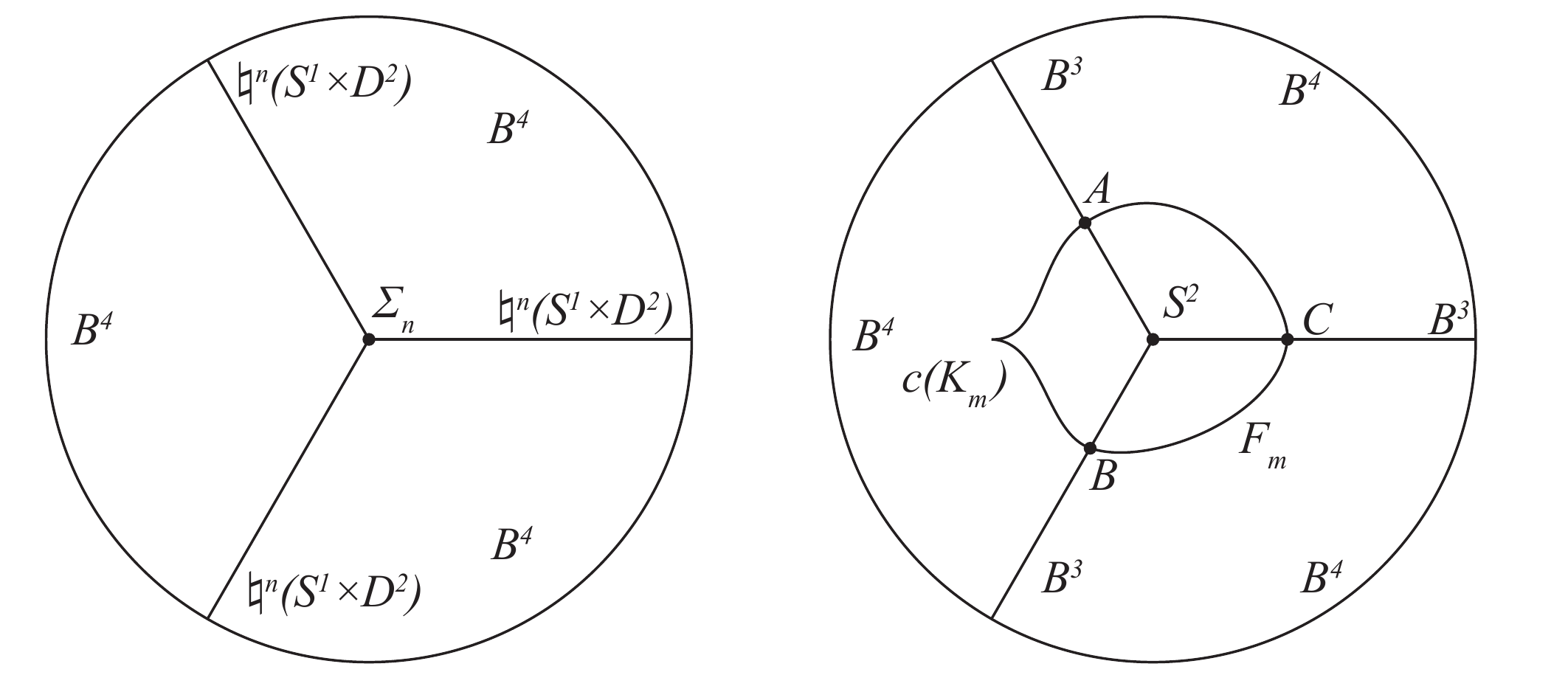}
\caption{A $(n;0,0,0)$-trisection of $\#n \overline{\mathbb{C}P}^2$, obtained as branched cover of $S^4$ over a trisected surface $F_m$ with one singularity $K_m$.}
\end{figure}

We use 3-colored singular triplane diagrams to construct a family of 3-fold dihedral covers of $S^4$ which realize the knots $K_m$ given in Figure~\ref{knots} as singularities on the branching sets.  This construction allows us to compute the values of $\Xi_3(K_m)$  using the induced trisections of the corresponding branched cover. {As a corollary, we obtain Theorem~\ref{range}, which establishes the range of the invariant~$\Xi_3$.}

\begin{prop}
	\label{covers}
	Each knot $K_m$ in Figure~\ref{knots} arises as the only singularity on  a 3-fold dihedral branched cover $f_m: \#(2m+1) \overline{\mathbb{CP}}^2\rightarrow S^4$ whose branching set $F_m$ is an oriented surface of genus $m$, embedded smoothly in $S^4$ away from the one singular point.  Equivalently, each knot $\overline{K}_m$ arises as the only singularity on  a 3-fold dihedral branched cover $\bar{f}_m: \#(2m+1)\mathbb{CP}^2\rightarrow S^4$, also with an embedded oriented branching set of genus $m$.
\end{prop}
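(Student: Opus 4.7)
The plan is to construct, for each $m \geq 0$, an explicit Fox 3-colored singular tri-plane diagram $(A_m, B_m, C_m)$ presenting the knot $K_m$ of Figure~\ref{knots} as the unique singularity on an embedded surface $F_m \subset S^4$, and then to analyze the resulting 3-fold irregular dihedral cover $f_m \colon X_m \to S^4$ via its induced trisection. The target parameters are bridge number $b = 2m+3$ with $c_1 = 1$ and $c_2 + c_3 = 4$, so that, treating the cone on $K_m$ as a topological disk, the induced CW decomposition (with $2b$ vertices, $3b$ arcs, and $1 + c_2 + c_3$ faces) yields
\begin{equation*}
	\chi(F_m) = -b + 1 + c_2 + c_3 = -(2m+3) + 5 = 2 - 2m,
\end{equation*}
forcing $g(F_m) = m$. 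Smoothness of $F_m$ off the singular point is automatic from the bridge tri-plane construction.

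For the tri-planes themselves, I would first build $(A_0, B_0, C_0)$ by hand, starting from the 3-coloring of $K_0 = 6_1$: choose a $(3;\,1,c_2,c_3)$-singular tri-plane with $A_0 \cup \overline{B_0} = 6_1$ and verify that $B_0 \cup \overline{C_0}$ and $C_0 \cup \overline{A_0}$ are unlinks in $S^3$ via a short sequence of Reidemeister moves inside a 3-ball. For general $m$, I would produce $(A_m, B_m, C_m)$ from $(A_{m-1}, B_{m-1}, C_{m-1})$ inductively, implanting into each of the three tangles the same local twist region that distinguishes $K_m$ from $K_{m-1}$ in Figure~\ref{knots}, so that the bridge number increases by two and the two non-singular boundary unions remain unlinks after the modification.

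The substantive step is identifying $X_m$. By the lifting procedure of~\cite{cahnkjuchukova2017singbranchedcovers}, the colored tri-plane yields a trisection of $X_m$ whose central surface $\tilde\Sigma$ is the 3-fold irregular dihedral cover of $S^2$ branched at the $2b$ bridge points with reflection monodromy in $D_3$ at each; Riemann--Hurwitz then gives $g(\tilde\Sigma) = b - 2 = 2m + 1$. A standard Euler characteristic count for the branched cover yields $\chi(X_m) = 2m + 3$, which forces $k_1 + k_2 + k_3 = 0$, so the induced trisection has type $(2m+1, 0, 0, 0)$. Seifert--van Kampen applied to this decomposition into three 4-balls glued along three copies of $S^3$ forces $\pi_1(X_m) = 1$. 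Computing $\Xi_3(K_m, \rho_m) = 2m+1$ directly from the colored diagram via Equation~\ref{xi}, or equivalently reading the intersection form off the lifted trisection, gives $\sigma(X_m) = -(2m+1)$. Thus $X_m$ is a closed, simply connected, smooth, negative-definite 4-manifold with $b_2(X_m) = 2m+1$, and Donaldson's diagonalization theorem together with Freedman's classification yield $X_m \cong \#(2m+1)\overline{\mathbb{CP}}^2$. The mirror statement follows by swapping the roles of $A_m$ and $B_m$, which reverses the ambient orientation of both $F_m$ and $X_m$.

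The main obstacle is the concrete combinatorial construction and verification: producing the tri-plane diagrams explicitly, checking via Reidemeister moves in a 3-ball that the two non-singular boundary unions are unlinks, and propagating the inductive twist-region modification consistently across all three tangles. Once the tri-plane is in hand, the trisection analysis, Euler characteristic count, signature computation, and final Donaldson/Freedman identification are routine.
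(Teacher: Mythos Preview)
Your overall scaffolding matches the paper's: a colored singular tri-plane with $b=2m+3$ bridges and $(c_1,c_2,c_3)=(1,2,2)$, central surface of genus $2m+1$ by Riemann--Hurwitz, and an induced $(2m+1;0,0,0)$-trisection of the cover. But the identification of $X_m$ diverges from the paper, and there are two genuine gaps.

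First, you never verify that $F_m$ is orientable. Both the Euler-characteristic-to-genus step and the Proposition itself require this, and the paper explicitly flags it as needing a careful check: one coherently orients the $2$-cells (the unlink disks and the cone) so that every bridge inherits opposite orientations from its two adjacent faces. Without this, neither your genus computation nor the ``oriented branching set'' clause is justified.

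Second, your identification of $X_m$ is both different and weaker than the paper's. The paper does \emph{not} invoke Donaldson or Freedman: it writes down the tri-plane $(A_n,B_n,C_n)$ uniformly for all $n$, lifts the disk bottoms to the irregular cover $\Sigma_n$ of the bridge sphere via Hilden's explicit model, and recognizes the resulting curve system as the standard trisection diagram of $\#n\overline{\mathbb{CP}}^2$. This is hands-on and yields a \emph{diffeomorphism}. Your route---compute $\Xi_3(K_m)$, conclude $|\sigma(X_m)|=b_2(X_m)$, hence definite, then Donaldson plus Freedman---only produces a homeomorphism (exotic $\#n\overline{\mathbb{CP}}^2$ are not ruled out), and it rests on evaluating $\Xi_3(K_m,\rho_m)$ for all $m$ via Equation~\ref{xi}, an infinite family of characteristic-knot and linking computations you have not performed. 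In fact the paper's logic runs the other way: Proposition~\ref{covers} is what \emph{computes} $\Xi_3(K_m)=2m+1$ (see Theorem~\ref{range}), so appealing to $\Xi_3$ here is circular unless you actually carry out the Equation~\ref{xi} calculation independently. Your alternative, ``reading the intersection form off the lifted trisection,'' \emph{is} the paper's approach---and once done, it renders Donaldson and Freedman unnecessary.
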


{\bf Remark.}  By deleting a small neighborhood of the singularity on the branching set in $S^4$, one obtains an oriented, 3-colored surface in $F_m'\subset B^4$ with  $\partial F_m' = K_m$. In Section~\ref{proof}, we prove that the genus of $F_m'$ is minimal, that is, equal to $g_4(K_m)$.  Moreover, by construction, each surface $F'_m$ is ribbon.

\begin{proof}[Proof of Proposition~\ref{covers}]

We will construct the surface $F_m$ and will give its Fox coloring using a colored (singular) tri-plane diagram. From this information, we will produce a trisection of the dihedral cover of $S^4$ determined by this coloring. We will identify this cover as $\#n \overline{\mathbb{CP}}^2$, where $n=2m+1$. 	
	\begin{figure}[htbp]
	\includegraphics[width=\textwidth]{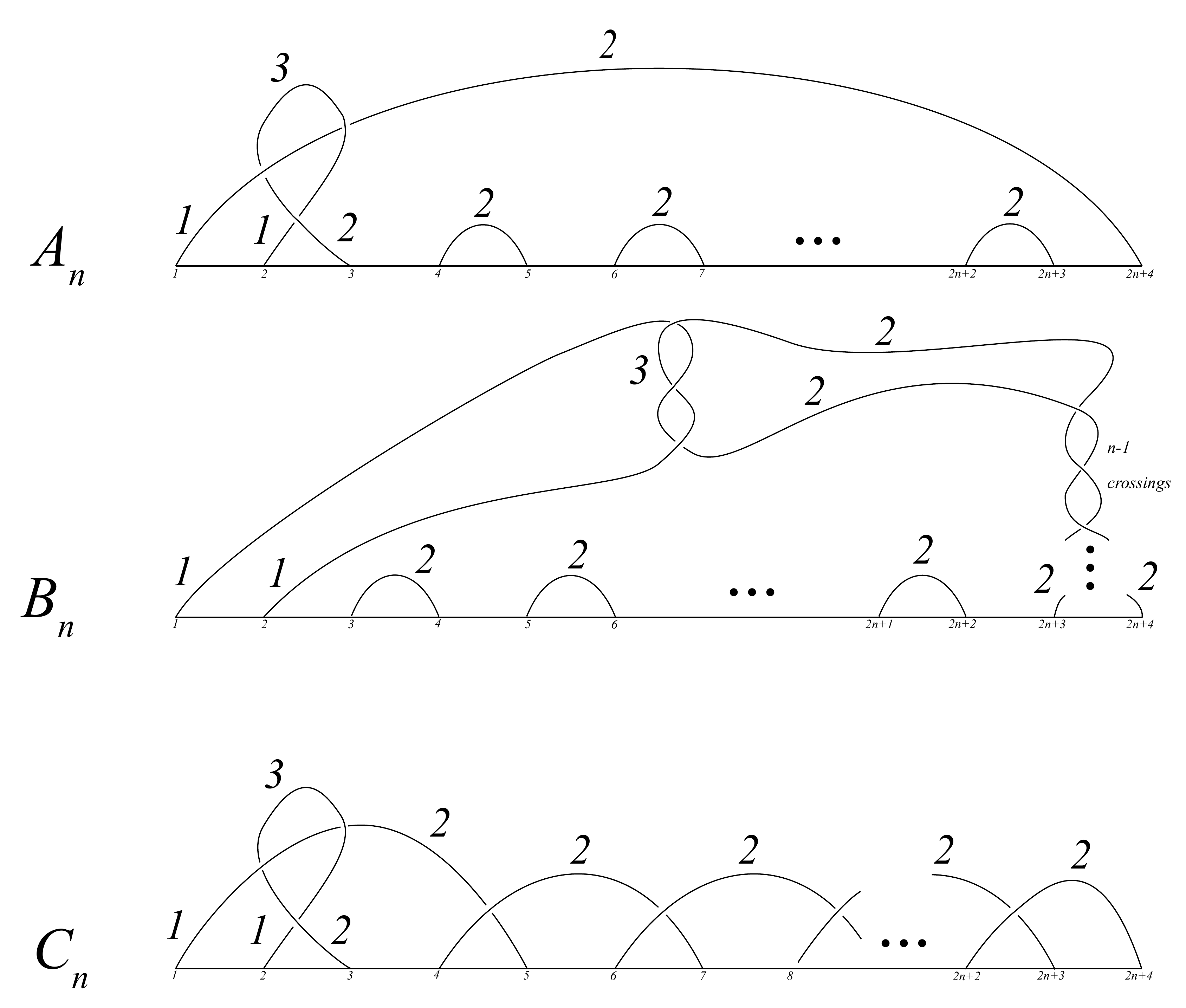}
	\caption{A colored tri-plane diagram corresponding to a branched covering $\#n\overline{\mathbb{C}P}^2\rightarrow S^4$, in the case where $n$ is odd.  The numbers $\{1,2,3\}$ along the arcs describe the coloring. There is one singularity $K_{m}$ on the branching set, where $m=(n-1)/2$.   By reversing the roles of $A_n$ and $B_n$, one obtains a branched covering $\#n\mathbb{C}P^2\rightarrow S^4$ with singularity $\overline{K}_m$. }
	\label{oddCP2triplane.fig}
	\end{figure}

The colored tri-plane diagram $(A_n,B_n, C_n)$ for $F_m$, where $m=(n-1)/2$, is shown in Figure ~\ref{oddCP2triplane.fig}.  We write the value $i\in\{1,2,3\}$ next to an arc of a tangle or knot if the homotopy class of the meridian of that arc is mapped to the reflection in $D_3$ fixing $i$.

The union $A_n\cup\overline{B}_n$ is the knot $\overline{K}_m$, while $B_n\cup \overline{C}_n$ and $C_n\cup \overline{A}_n$ are each 2-component unlinks; see Figure ~\ref{orientabilitycheck.fig} for a verification of this fact when $n=3$. A tri-plane diagram with $b$ bridges and $c_i$ components in each link diagram has Euler characteristic $c_1+c_2+c_3-b$; hence, the surface $F_m$ with singularity $\overline{K}_m$ has Euler characteristic $3-n$ and genus $m=(n-1)/2$ since $F_m$ is connected and orientable, and since the tangles $A_n$, $B_n$ and $C_n$ have $b=n+2$ bridges.

The fact that $F_m$ is orientable requires a careful check.  Consider the cell structure on $F_m$ corresponding to its tri-plane structure.  To show that $F_m$ is orientable, we show that it is possible to coherently orient the faces of this cell structure so that each edge (a bridge in one of the three tangles $A_n$, $B_n$, or $C_n$) inherits two different orientations from the two faces adjacent to it.  This is shown in Figure ~\ref{orientabilitycheck.fig} in the case $m=1$ (or $n=3$).

\begin{figure}[htbp]\includegraphics[width=\textwidth]{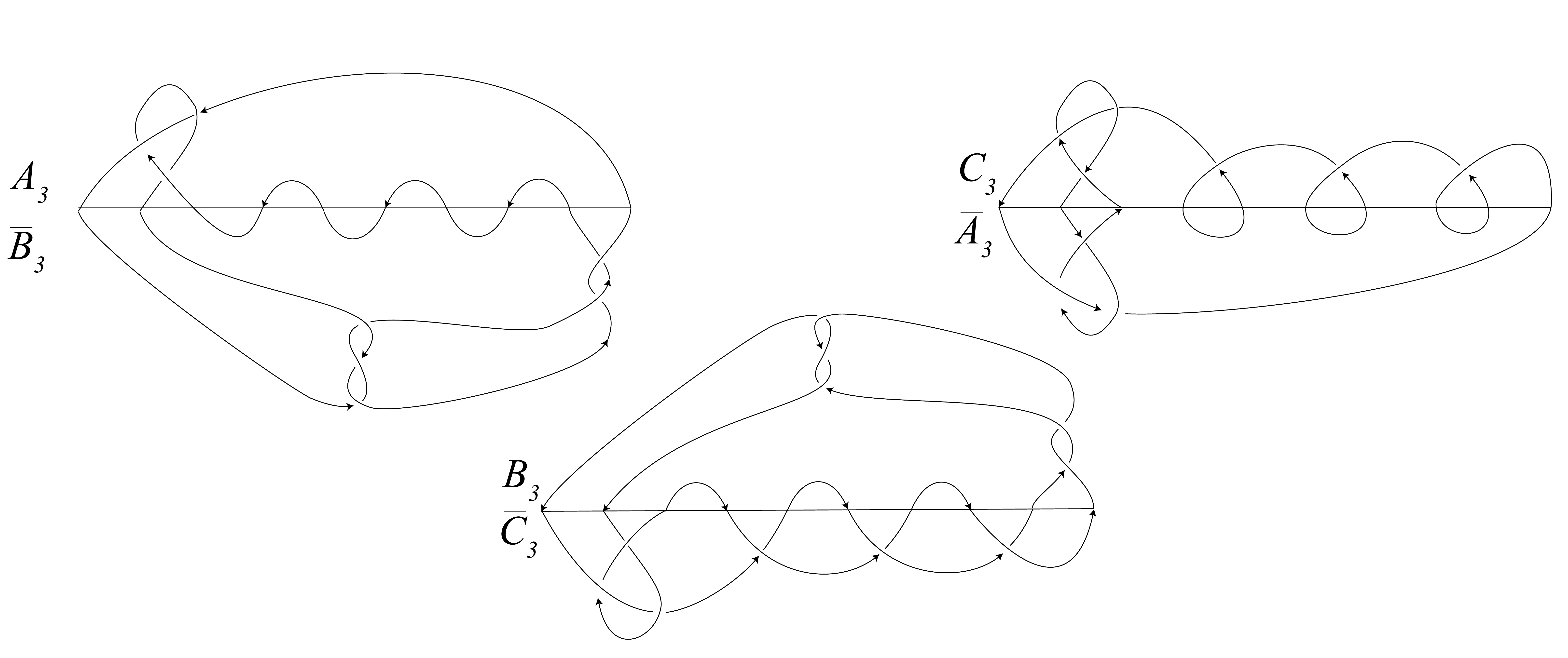}
	\caption{The links $A_3\cup \overline{B}_3$, $B_3\cup\overline{C}_3$, and $C_3\cup \overline{A}_3$. Note that $A_3\cup \overline{B}_3$ is the knot $\overline{K}_1$. }
	\label{orientabilitycheck.fig}
\end{figure}

An Euler characteristic computation shows that the 3-fold dihedral branched cover of the bridge sphere $S^2$, branched along the $2(n+2)$ endpoints of the bridges, is a surface $\Sigma_n$ of genus $n$. We now show this 3-colored tri-plane diagram $(A_n,B_n, C_n)$ gives rise to a genus $n$ trisection of $\#n\overline{\mathbb{C}P}^2$ with central surface $\Sigma_n$ following a method explained in~\cite{cahnkjuchukova2017singbranchedcovers}.  The branching set $F_m$ is orientable and has one singularity of type $K_m$, so it will follow from Equation \ref{xisignature} that $\Xi_3(K_m)=-\sigma(\#_n\overline{\mathbb{C}P}^2)=n$.

If a properly embedded $b$-strand tangle $(T,\partial T)\subset (B^3,S^2)$ with arcs $t_1,t_2,\dots t_b$ is trivial, then by definition there exists a collection of disjoint arcs $d_1,d_2,\dots, d_b$ in $S^2$ such that the boundary unions $t_i\cup d_i$ bound a collection of disjoint disks in $B_3$.  We refer to the $d_i$ as {\it disk bottoms}.  The existence of such a collection of disks is equivalent to the arcs of $T$ being simultaneously isotopic to a collection of disjoint arcs (the $d_i$) in $S^2$.

To determine the trisection diagram, we must first find the disk bottoms for the three tangles $A_n$, $B_n$ and $C_n$, then lift them from the bridge sphere $S^2$ to its irregular dihedral cover $\Sigma_n$. The curves in the trisection diagram are formed by certain lifts of these disk bottoms; we identify these lifts later.

The disk bottoms for each tangle $A_n$, $B_n$, and $C_n$ are depicted in Figure~\ref{CPnshadows.fig}, in the case $n=3$.  In Figure~\ref{oddCP2downstairsshadows.fig}, we draw just three of the disk bottoms for each of $A_n$ (blue), $B_n$ (red), and $C_n$ (green) on the same copy of $S^2$.

\begin{figure}[htbp]
	\includegraphics[width=3.5in]{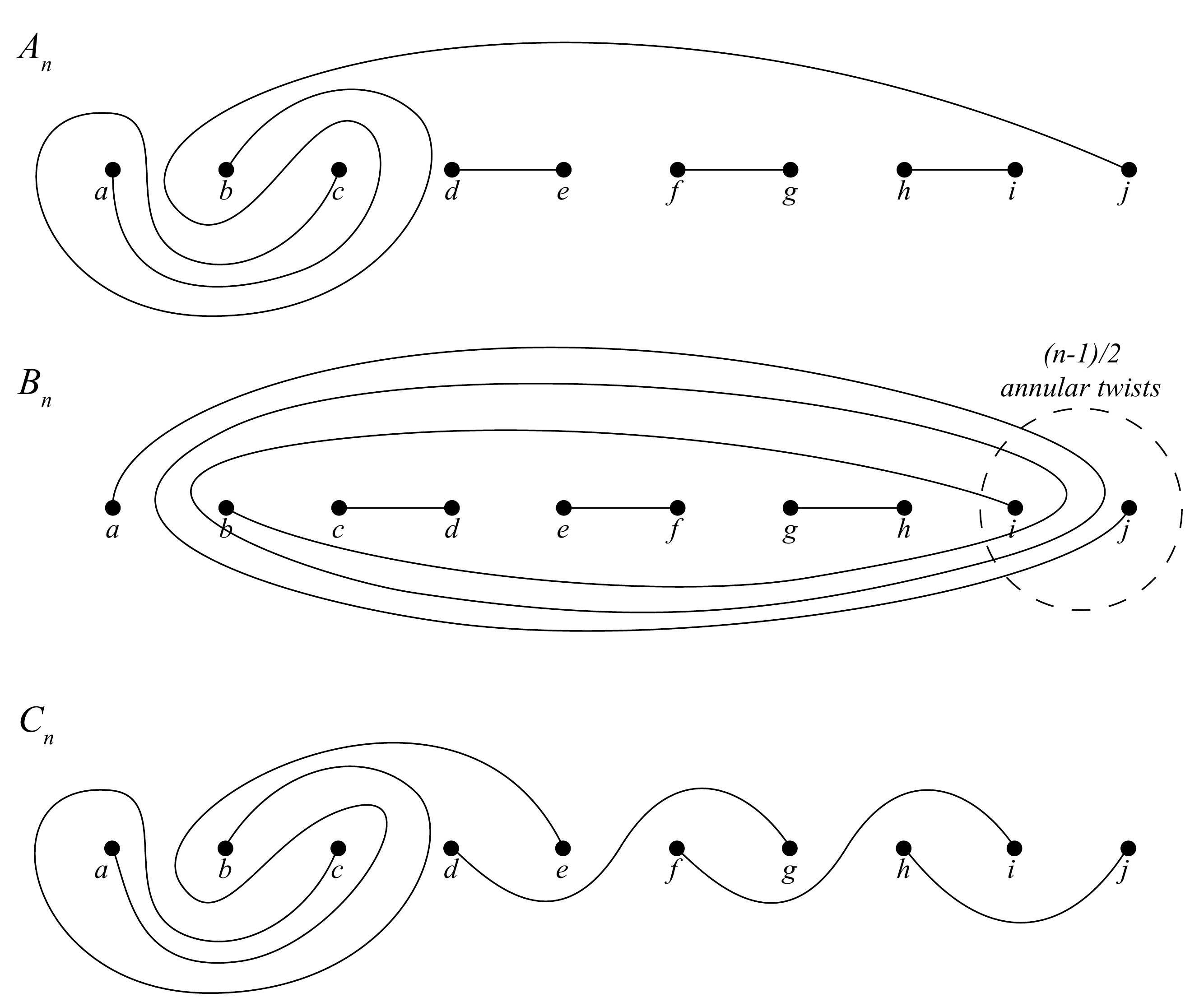}
	\caption{Disk bottoms for the tri-plane diagram $(A_n,B_n,C_n)$ when $n=3$.}\label{CPnshadows.fig}
\end{figure}

\begin{figure}[htbp]
	\includegraphics[width=6in]{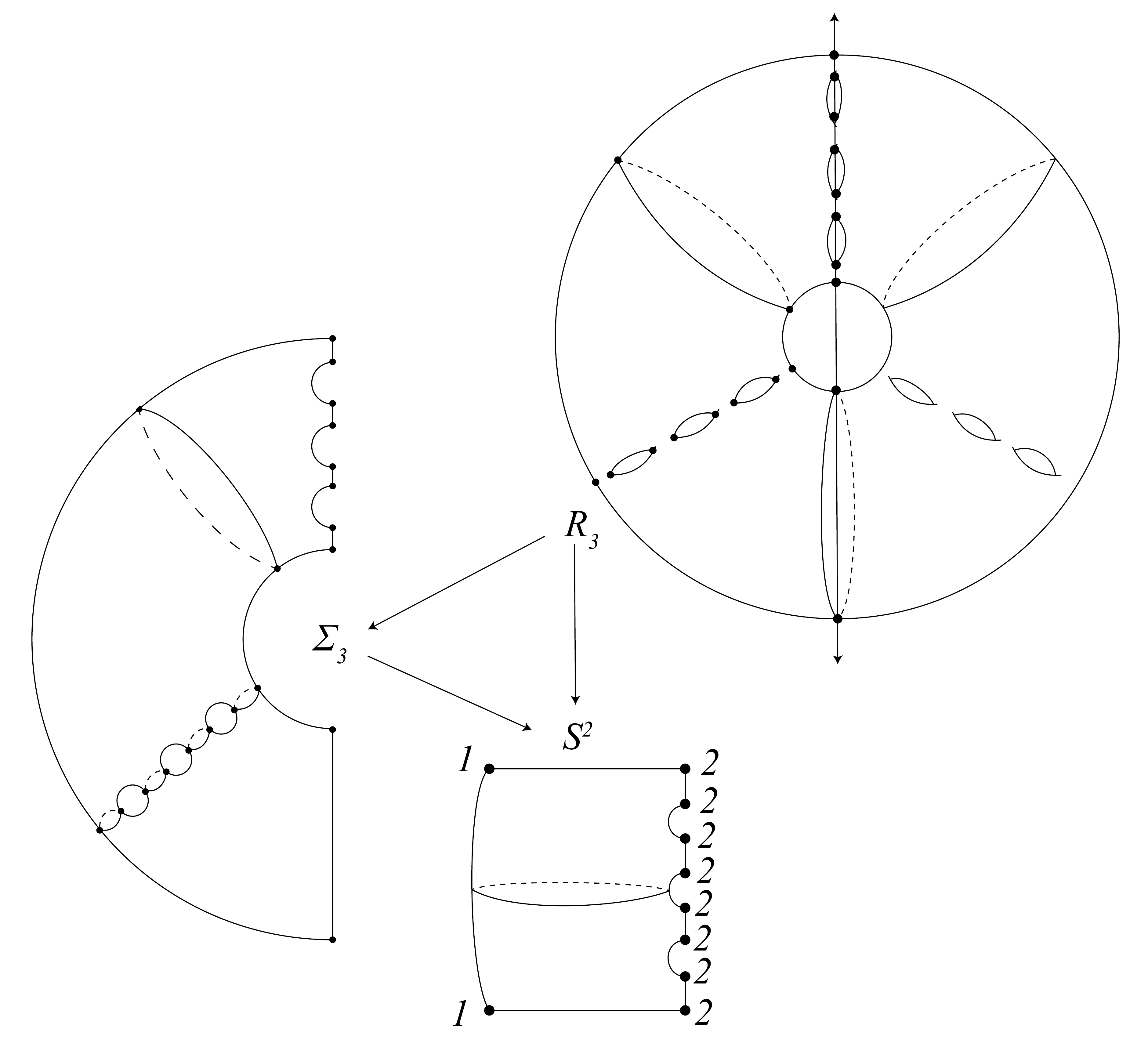}
	\caption{A 6-fold regular dihedral cover $R_n$ of $S^2$ branched along {$2(n+2)$} points, with $n=3$; the irregular cover $\Sigma_n$ is the quotient of $R_n$ by $180^\circ$ rotation about the vertical axis.}\label{hildenregularcover.fig}
\end{figure}

\begin{figure}[htbp]
	\includegraphics[width=4in]{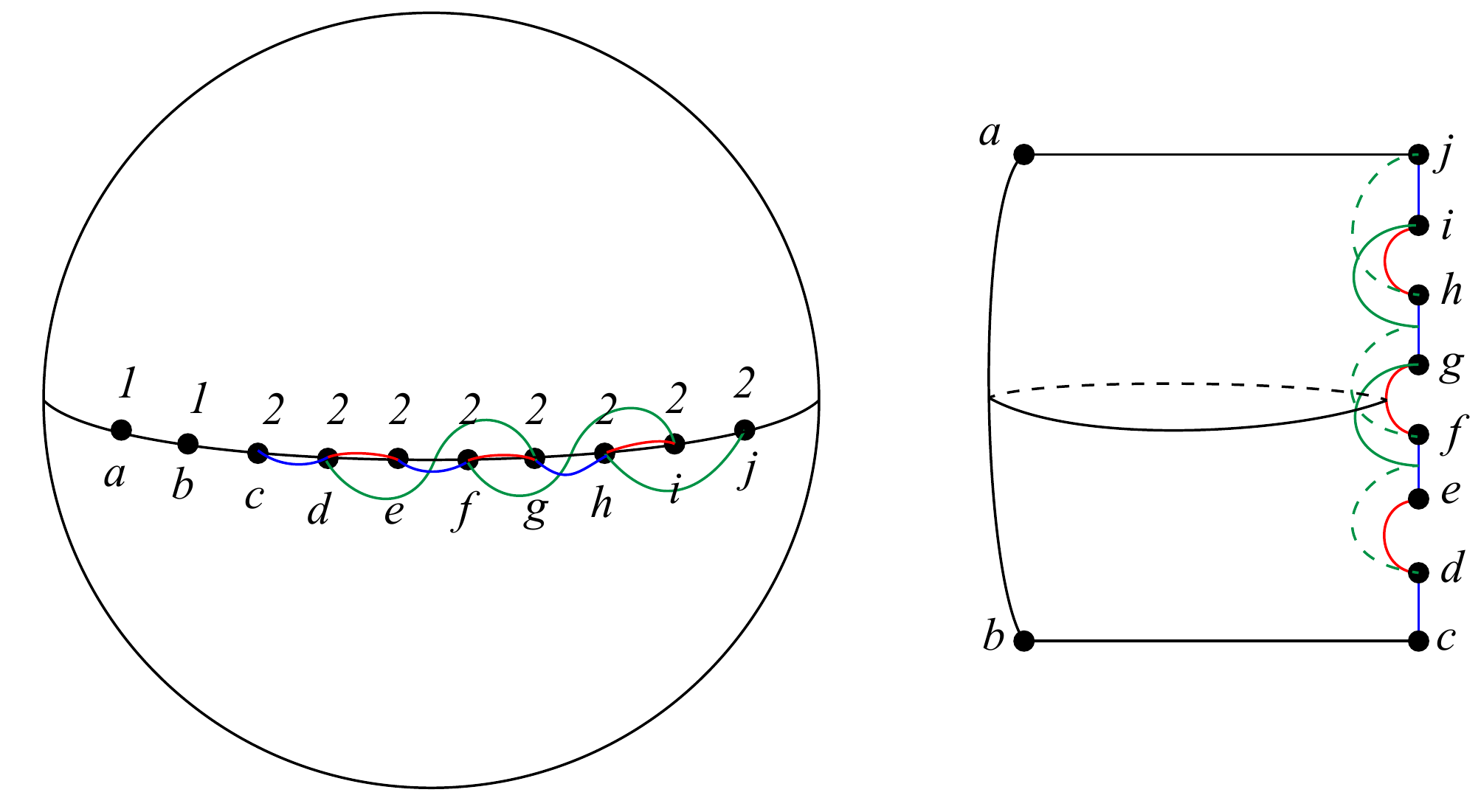}
	\caption{Disk bottoms for the tri-plane diagram $(A_n,B_n,C_n)$ when $n=3$, drawn on the bridge sphere.}\label{oddCP2downstairsshadows.fig}
\end{figure}

  In the next step of the proof, we use a construction of the irregular 3-fold dihedral cover $\Sigma_n\rightarrow S^2$, branched along {$2(n+2)$} points in $S^2$, due to Hilden \cite{hilden1974every}.  We review this construction now; the reader should refer to Figure ~\ref{hildenregularcover.fig} for an example in the case $n=3$. In this construction, the meridians of two branch points map to the transposition $(23)$ (equivalently, are colored `1'), and the meridians of the remaining {$2n+2$} branch points map to the transposition $(13)$ (equivalently, are colored `2').  One first constructs the 6-fold {\it regular} dihedral cover $R_n\rightarrow S^2$ branched along {$2(n+2)$} points determined by this coloring. The resulting surface has genus $3n+1$.  The 3-fold irregular dihedral cover $\Sigma_n$ is obtained from this regular one by an involution, namely $180^\circ$ rotation about the vertical axis.

 Next, we lift the disk bottoms from the bridge sphere to $\Sigma_n$, where $\Sigma_n$ is constructed as above. Each disk bottom has three lifts to $\Sigma_n$, two of which fit together to form a closed curve.  Not all of these closed curves are necessarily essential curves on $\Sigma_n$; see~\cite{cahnkjuchukova2017singbranchedcovers} for further examples.  However, we may choose $n-2$ disk bottoms for each tangle $(A_n,B_n,C_n)$ whose lifts are essential.  These lifts are shown in Figure ~\ref{hildencover.fig}, again in the case $n=3$.

\begin{figure}[htbp]
	\includegraphics[width=4in]{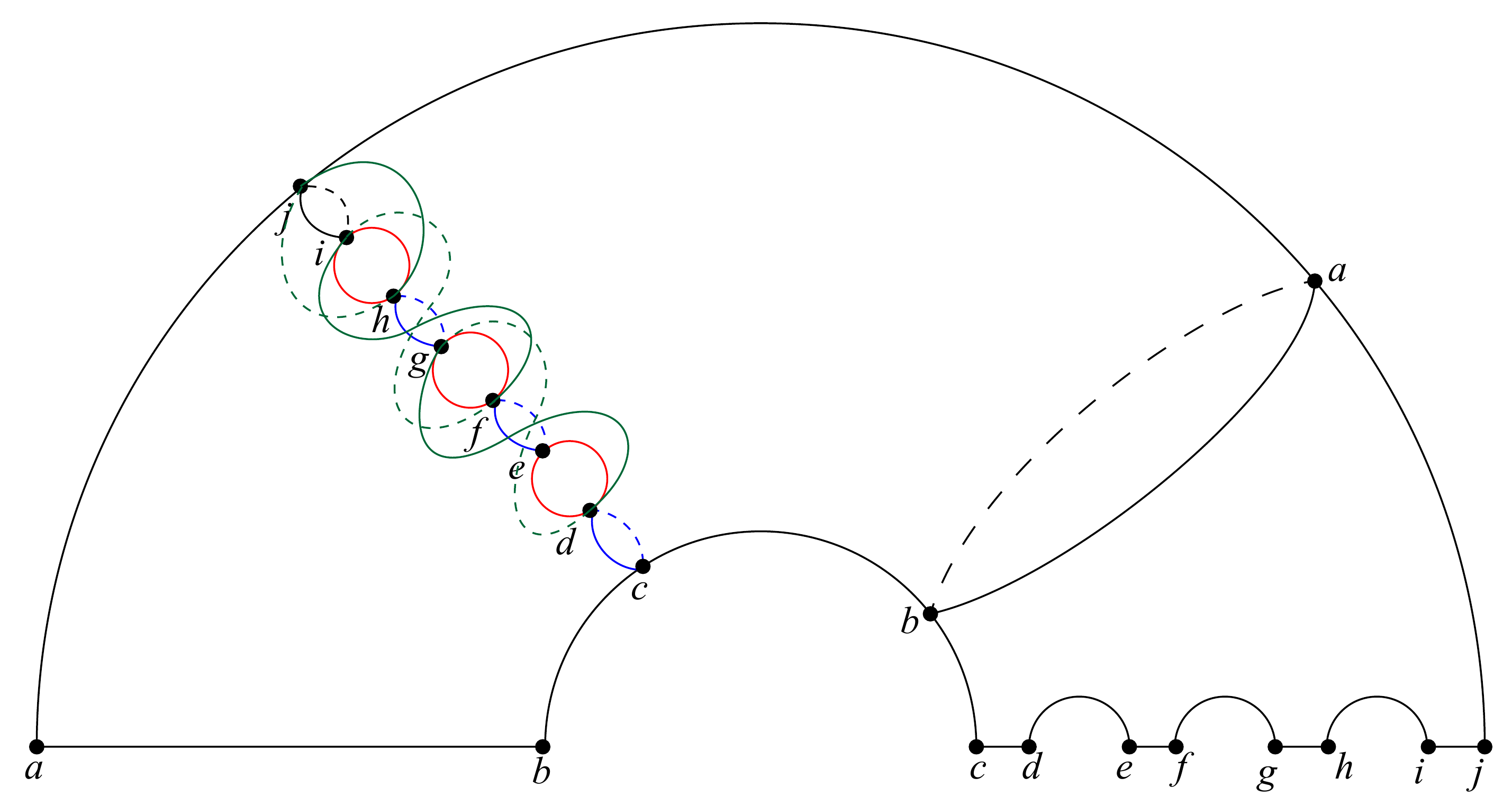}
	\caption{Lifts of disk bottoms to the 3-fold irregular dihedral cover of $S^2$, for the tri-plane diagram $(A_n,B_n,C_n)$, when $n=3$.}\label{hildencover.fig}
\end{figure}	

The resulting curves form a trisection diagram for $\#n\overline{\mathbb{C}P}^2$.  Moreover, the standard trisection of $S^4$, branched along $F_m$, lifts to a $(n;0,0,0)$-trisection of $\#n\overline{\mathbb{C}P}^2$.  This can be found by analyzing the lifts of the three pieces of the trisection of $(S^4,F_m)$; for details see Theorem 8 of \cite{cahnkjuchukova2017singbranchedcovers}.\end{proof}

We use the above construction to establish the range of the invariant $\Xi_3$.

\begin{thm} 
\label{range}
Let $n$ denote an integer. There exists a {{\it strongly}} 3-admissible singularity $K_n$ and a 3-coloring $\rho_n$ of $K_n$ such that  $\Xi_3(K_n, \rho_n)=n$ if and only if $n\in 2\mathbb{Z}+1$.
\end{thm}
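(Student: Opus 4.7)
The plan is to prove the two directions of the equivalence separately. For the forward direction (realization of odd integers), Proposition~\ref{covers} already provides the construction: for each $m\geq 0$, the cover $f_m\colon \#(2m+1)\overline{\mathbb{C}P}^2\to S^4$ has an oriented branching set with $K_m$ as its unique singularity, so Equation~(\ref{xi-short}) gives $\Xi_3(K_m,\rho_m)=-\sigma(\#(2m+1)\overline{\mathbb{C}P}^2)=2m+1$, realizing every positive odd integer. The mirror cover $\bar f_m\colon \#(2m+1)\mathbb{C}P^2\to S^4$, with singularity $\overline{K}_m$, yields $\Xi_3(\overline{K}_m)=-(2m+1)$, covering every negative odd integer.

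For the converse, I would show that every strongly $3$-admissible $(K,\rho)$ satisfies $\Xi_3(K,\rho)\in 2\mathbb{Z}+1$ by an Euler characteristic parity computation. Let $f\colon X\to S^4$ be a realizing $3$-fold dihedral cover, so $X$ is a closed oriented $4$-manifold, the branching set $F\subset S^4$ is a closed connected orientable surface of some genus $g$ with $K$ as its unique singularity $\mathfrak{s}$, and $\Xi_3(K,\rho)=-\sigma(X)$ by Equation~(\ref{xi-short}). Since $\chi(X)-\sigma(X)$ is always even for any closed oriented $4$-manifold, it suffices to prove that $\chi(X)$ is odd.

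To compute $\chi(X)$, I would analyze the local structure of the irregular dihedral cover along the branching set. Near a smooth point of $F$, the meridian maps to a transposition $\tau\in D_3$, whose action on the three cosets of $\langle\tau\rangle$ has orbits of sizes $1$ and $2$; the local preimage of $F$ therefore splits into an unramified sheet and a doubly ramified sheet. Since local multiplicity is globally well-defined, this yields a decomposition $f^{-1}(F\setminus\{\mathfrak{s}\})=F^{(1)}\sqcup F^{(2)}$ with each $F^{(i)}$ mapping homeomorphically onto $F\setminus\{\mathfrak{s}\}$. Strong admissibility enters essentially here: it forces the dihedral cover of $(S^3,K)$ to be $S^3$, so $\mathfrak{z}:=f^{-1}(\mathfrak{s})$ is a single point, and the preimage of $K$ in this $S^3$ is a two-component link (one unramified and one doubly ramified copy of $K$) whose cones in $X$ glue precisely at $\mathfrak{z}$. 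Hence $f^{-1}(F)=F^{(1)}\cup_{\mathfrak{z}}F^{(2)}$, two copies of $F$ joined at a single point, with $\chi(f^{-1}(F))=2\chi(F)-1=3-4g$. Combined with the unramified contribution $\chi(X\setminus f^{-1}(F))=3\chi(S^4\setminus F)=6g$, this gives $\chi(X)=2g+3$, which is odd. The main delicacy of the argument lies in justifying that the two sheets $F^{(1)}$ and $F^{(2)}$ intersect precisely at $\mathfrak{z}$ and nowhere else; once that local picture at the singularity is secured, the parity computation is immediate.
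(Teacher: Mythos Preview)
Your forward direction coincides with the paper's: both invoke Proposition~\ref{covers} to realize the positive odd integers via $f_m$, and both handle the negative odd integers by mirroring (you compute $\Xi_3(\overline{K}_m)=-\sigma(\#(2m+1)\mathbb{CP}^2)$ directly, the paper cites the identity $\Xi_p(\overline K)=-\Xi_p(K)$).

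Your converse, however, is genuinely different from the paper's. The paper argues the parity of $\Xi_p(K,\rho)$ term by term from the three-dimensional formula~(\ref{xi}): the linking term $\tfrac{p^2-1}{6p}L_V(\beta,\beta)$ is even, each Tristram--Levine signature $\sigma_{\zeta^i}(\beta)$ is even, and (citing \cite[Equation~2.20]{kjuchukova2018dihedral}) $\operatorname{rk} H_2(W(K,\beta))$ is odd when $p\equiv 3\pmod 4$, forcing $\sigma(W(K,\beta))$ odd. You instead work four-dimensionally: you use $\Xi_3(K,\rho)=-\sigma(X)$, the elementary identity $\chi(X)-\sigma(X)=2-2b_1+2b_2^-\equiv 0\pmod 2$ for closed oriented four-manifolds, and then compute $\chi(X)=2g+3$, which is odd. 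Your derivation of $\chi(X)$ is correct and in fact reproduces the paper's Equation~(\ref{chi}) for $p=3$; you could simply have cited that equation and observed $2p-\tfrac{p-1}{2}(\chi(F)+1)$ has the parity of $\tfrac{p-1}{2}$, which is odd exactly when $p\equiv 3\pmod 4$. So your method recovers the same range of $p$ as the paper's remark. The advantage of your route is that it is self-contained within the four-manifold picture and avoids the auxiliary cobordism $W(K,\beta)$ and Tristram--Levine signatures; the paper's route has the virtue that it computes the parity of $\Xi_p$ as an invariant of $(K,\rho)$ directly, without needing to fix a realizing cover with orientable branching set (though under this paper's standing orientability convention, that is not a constraint).
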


{\bf Remark.} The proof of Theorem~\ref{range} is slightly more general than what the theorem statement requires. That is, we establish that $\Xi_p(K, \rho)$ is odd whenever $p\equiv 3\mod 4$. Realizability of all odd integers by $\Xi_p$ is open for $p\neq 3$.

\begin{proof}[Proof of Theorem~\ref{range}]
	We have given a construction realizing each of the knots $K_m$ as the only singularity on a branched cover $\#({2m+1}) \overline{\mathbb{CP}}^2\rightarrow S^4$ whose branching set is oriented. By Equation~\ref{xisignature}, it follows that $\Xi_3(K_m)=-\sigma(\#(2m+1)\overline{\mathbb{CP}}^2)=2m+1$, where $m\geq 0$. Note also that $\Xi_p(\overline{K}_m)=-\Xi_p(K_m)$ as proved in~\cite{cahnkjuchukova2017singbranchedcovers}, where $\overline{K}$ denotes the mirror image of $K$. Of course, $K$ is (strongly) $p$-admissible if and only if $\overline{K}$ is. This proves that all odd integers are contained in the range of the invariant $\Xi_3$ on {strongly}  3-admissible knots. 
	
Conversely, we will verify that for any $p$-coloring $\rho$ of any {strongly} $p$-admissible singularity $K$, the integer  $\Xi_p(K, \rho)$ is odd. It suffices to assume that $p\equiv 3\mod 4$. We use Equation~\ref{xi}. Since $p$ is odd, $p^2\equiv 1\mod4$, so $\dfrac{p^2-1}{6p}L_V(\beta, \beta)$ is even.  It follows from~\cite[Equation 2.20]{kjuchukova2018dihedral} that, if $p\equiv 3\mod 4$, the rank of $H_2(W(K,\beta); \mathbb{Z})$ is odd, hence so is the signature. Lastly, each $\sigma_{\zeta^i}$ is an even integer.  It follows that $\Xi_p(K)$ is odd. \end{proof}

{\bf Remark.} The knot $K_m$ has bridge number 2, showing that two-bridge knots realize the full range of $\Xi_p$ when $p=3$. This answers a question posed in~\cite{kjuchukova2015classification}. It is not known whether the full range of $\Xi_p$ is realized by two-bridge knots when $p\neq 3$. It would be of interest to establish that it is ``sufficient" to consider two-bridge knots when constructing singular dihedral covers of four-manifolds since $p$-admissibility is particularly easy to detect for two-bridge singularities~\cite{kjorr2017admissible}.

\section{Proofs of {Theorems~\ref{bound}, ~\ref{example} and ~\ref{sum}}}
\label{proof}

\begin{proof}[Proof of Theorem~\ref{bound}]
	{\it (A)}
{Given a $p$-admissible knot $K$ with $p$-coloring $\rho$, we wish to prove the inequality:}

	$$\mathfrak{g}_{{p}}(K, \rho)\geq \dfrac{|\Xi_p(K,\rho)|-\rk H_1(M)}{p-1}-\dfrac{1}{2},$$
	where $M$ denotes the dihedral cover of $S^3$ branched along $K$ corresponding to $\rho$.	Throughout, all homology groups are with $\mathbb{Z}$ coefficients.

	For $K$ a $p$-admissible knot, by the definition of homotopy ribbon dihedral genus, there exists a topologically locally flat orientable homotopy ribbon surface $F'$ for $K$ such that the genus of $F'$ equals $\mathfrak{g}_p(K, \rho)$. (If $\rho$ does not extend over any locally flat, orientable, homotopy ribbon surface, $\mathfrak{g}_p(K, \rho)=\infty$ and the inequality is trivial.) Recall that, since $F'$ is orientable, $|\Xi_p(K,\rho)|=\sigma(X',\partial X')$; the righthand side denotes the Novikov signature of $X'$ as a manifold with boundary.  We will find an upper bound for $|\sigma(X',\partial X')|$ in terms of the Euler characteristic of $X=X'\cup c(\partial X')$.

	Let $\bar{\rho} :\pi_1(B^4-F')\rightarrow D_{p}$ be the homomorphism which extends the coloring $\rho:\pi_1(S^3-K)\rightarrow D_{p}$ and induces the cover $X'\to B^4$ branched over $F'$.  Let $\hat M$ be the unbranched irregular dihedral cover of $S^3-K$ corresponding to $\rho$, and $M$ the induced branched cover.  Denote by $F\subset S^4$ the singular surface which is the boundary union of $F'$ and the cone on $K$, so that $X$ is the dihedral cover of $S^4$ with branching set {$F$}.

	We will show that $X$ is simply-connected. Consider the commutative diagram below.  All maps in the diagram are either induced by inclusions or by covering maps. Clearly $p_*$ and $q_*$ are injective, as they are induced by covering maps, and $\mathsf{i}_{M*}$ and $\mathsf{i}_{X*}$ are surjective, as they are induced by inclusions of unbranched covering spaces into their branched counterparts.  The homomorphisms $\rho$ and $\bar{\rho}$ are surjective by definition.  Finally, since $F'$ is a homotopy-ribbon surface for $K$, $i_*$ is surjective.

	\begin{center}
	\begin{tikzcd}
			\pi_1(M)\arrow[r,"\mathsf{i}_*"]&\pi_1(X')\\
			\pi_1(\hat M)\arrow[r,"j_*"]\arrow[d,"p_*"]\arrow[u,"\mathsf{i}_{M*}"]&\pi_1(\hat X')\arrow[d,"q_*"]\arrow[u,"\mathsf{i}_{X*}"]\\
			\pi_1(S^3-K)\arrow[r,"i_*"]\arrow[d,"\rho"]&\pi_1(B^4-F')\arrow[ld,"\bar{\rho}"]\\
			D_{p}
			\end{tikzcd}
			\end{center}
	 We now show that $j_*$ is surjective as well.  Consider an element $\gamma\in\pi_1(\hat X')$. Since $i_*$ is surjective, there exists an element $\delta\in \pi_1(S^3-K)$ such that $i_*(\delta)=q_*(\gamma)$.  We have that $\bar{\rho}\circ q_*(\gamma)\in \mathbb{Z}/2\mathbb{Z}\subset D_p$, the reflection subgroup which determines the cover $\hat X'$ of $B^4-F'$. By commutativity of the lower triangle, $\rho(\delta)=\bar{\rho}\circ q_*(\gamma)\in  \mathbb{Z}/2$, so { $\delta\in \text{Im }p_*$.} Take $\tilde{\delta}\in \pi_1(\hat M)$ such that $p_*(\tilde{\delta})=\delta.$  Consider $q_*\circ j_*(\tilde{\delta})$, which by commutativity is equal to $i_*\circ p_*(\tilde{\delta})$.  Now $q_*\circ j_*(\tilde{\delta})=i_*(\delta)=q_*(\gamma)$. By injectivity of $q_*$, we have $j_*(\tilde{\delta})=\gamma$, so $j_*$ is indeed surjective.
	 
	Observe that, since $j_*$ and $\mathsf{i}_{X*}$ are both surjective, $\mathsf{i}_*:\pi_1(M)\rightarrow \pi_1(X')$ is surjective as well .

	By Seifert-van Kampen's theorem, we have $\pi_1(X)\simeq \pi_1(c(M))*_{\pi_1(M)}\pi_1(X')$. The cone $c(M)$ is contractible so $\pi_1(c(M))$ is trivial.  Hence
	$$\pi_1(c(M))*_{\pi_1(M)}\pi_1(X')\simeq\pi_1(X')/\text{im }\mathsf{i}_*(\pi_1(M)).$$
The quotient $\pi_1(X')/\text{im }\mathsf{i}_*$ is trivial since $\mathsf{i}_*$ is surjective.	 Hence $X$ is simply connected and, in particular, $H_1(X)=0$. We also know that $H_0(X)=1$ since $X$ is path-connected.

Next, consider $$\chi(X)=\rk H_4(X)-\rk H_3(X)+ \rk H_2(X)+1.$$  
Since $X=X'\cup c(\partial X')$, $$H_n(X)=H_n(X'/\partial{X'})=\tilde{H}_n(X',\partial X').$$   By Lefschetz Duality and Universal Coefficients, we see that
$$\rk H_3(X)=\rk H_3(X',\partial X')=\rk H^1( X')=\rk H_1(X'),$$
and
$$\rk H_4(X)=\rk H_4(X',\partial X')=\rk H^0(X')=\rk H_0(X')=1,$$
hence $$\chi(X)=2+\rk H_2(X)-\rk H_1(X').$$

  By \cite[Equation~1.1]{kjuchukova2018dihedral} we have 
  	\begin{equation}\label{chi}
  	\chi(X)=2p-\dfrac{p-1}{2}\chi({F})-\dfrac{p-1}{2}.	
  	\end{equation}

Recall that the Novikov signature $\sigma(X,\partial X')$ is the signature of the intersection form defined on the image of the map $\mathfrak{i}_\ast: H_2(X')\rightarrow H_2(X',\partial X')$ induced by inclusion. Thus,
\begin{equation} \label{inequality-chain}
	|\Xi_p(K,\rho)|=\sigma(X,\partial X')\leq \rk \text{im }\mathfrak{i}_\ast \leq \rk H_2(X',\partial X')=\rk H_2(X).
\end{equation}

The result follows by combining this inequality with the two formulas for $\chi(X)$ above. We substitute $$\chi(F)=2-2g(F)$$ into Equation~\ref{chi}, and $$\rk H_2(X)= \chi(X)-2+\rk H_1(X')$$ into~\ref{inequality-chain}, which gives
$$|\Xi_p(K,\rho)|\leq 2p-\dfrac{p-1}{2}(2-2g(F))-\dfrac{p-1}{2}-2+\rk H_1(X').$$

Simplifying, we obtain 

$$g(F)\geq \dfrac{|\Xi_p(K,\rho)|-\rk H_1(X')}{p-1}-\dfrac{1}{2}.$$
Finally, since the inclusion $\mathsf{i}:M\rightarrow X'$ induces a surjection on fundamental groups, we also know that $\rk H_1(M)\geq \rk H_1(X')$.  Hence 
$$g(F)\geq \dfrac{|\Xi_p(K,\rho)|-\rk H_1(M)}{p-1}-\dfrac{1}{2}.$$
\\

	{\it (B)} 		{Let $K$ be a $p$-admissible knot with respect to a coloring $\rho$ and let $F'\subset B^4$ be a homotopy ribbon, locally flat oriented surface with boundary $K$ such that $\rho$ extends over $F'$. Denote by $F\subset S^4$ the surface with singularity $K$ obtained as a boundary union of $F'$ and the cone on $K$, and denote by $X$ the dihedral cover of $S^4$ determined by the induced coloring of $F$.  We assume that $X$ is a definite manifold.  {In particular, $K$ is in fact strongly admissible with respect to the coloring $\rho$, and the corresponding branched cover $M$ of $S^3$ along $K$ is again $S^3$.} We then wish to show that the inequality~(\ref{inequality}) is sharp:
		$$\mathfrak{g}_{{p}}(K, \rho) = \dfrac{|\Xi_p(K,\rho)|}{p-1}-\dfrac{1}{2}.$$
		
		Precisely, we will show that the right hand-side of this equation equals the genus of $F'$. That is, $F'$ will be seen to realize the lower bound from (A) on the dihedral genus $\mathfrak{g}_{{p}}(K, \rho)$ of $K$.} 	
		
	Since $X$ is a definite manifold,  $\text{rk } H_2(X)=|\sigma(X)|$. {By the proof of (A), $X$ is simply connected;} by Poincar\'e duality we have $\chi(X)=2+\rk H_2(X)$ and hence $$|\Xi_p(K, \rho)|=|\sigma(X)|= \chi(X)-2.$$  On the other hand, denoting by $g(F)$ the genus of $F$, by  Equation~\ref{chi} we have $$\chi(X)=2p-\dfrac{p-1}{2}(2-2g(F))-\dfrac{p-1}{2}.$$ Putting these two equations together, we conclude that $$g(F)= \dfrac{|\Xi_p(K,\rho)|}{p-1}-\dfrac{1}{2}.$$ 
	By assumption, the coloring $\rho$ extends over $F'$, so  $$g(F')\geq\mathfrak{g}_{{p}}(K, \rho)\geq\dfrac{|\Xi_p(K,\rho)|}{p-1}-\dfrac{1}{2}.$$ Thus, $F'$ realizes the $p$-dihedral genus of $K$.

		{In the second part of the theorem, we assume in addition that $|\sigma(K)| = \frac{2|\Xi_p(K, \rho)|}{p-1} -1$, where $\sigma(K)$ is the signature of the knot $K$. We wish to show that the topological four-genus and the topological homotopy ribbon $p$-dihedral genus of $K$ both equal $\frac{|\sigma(K)|}{2}$.}

	The additional assumption here can be rewritten as $|\sigma(K)|=2\frak{g}_p(K,{\rho})$.  Murasugi's signature bound~\cite[Theorem 9.1]{murasugi1965certain} states that $g_4(K)\geq |\sigma(K)|/2$.
	Thus, we have $g_4(K)\geq \frak{g}_p(K,{\rho})$.  But $g_4(K)\leq \frak{g}_p(K)\leq {\frak{g}_p(K,\rho)}$ in general, so $g_4(K)= \frak{g}_p(K)$.
	\end{proof}

	\begin{proof}[Proof of {Theorem} \ref{example}]
Our aim is to show that the equalities		$$g_4(K_{m})=\mathfrak{g}_{{3}}(K_{m})=\dfrac{|\Xi_3(K_{m}, \rho_{m})|}{2} -\frac{1}{2}=m$$ 	hold for the 3-colored knots $K_m$ introduced in the previous section. In particular, it will follow that the generalized topological Slice-Ribbon Conjecture holds for these knots.  

By {\it (B)} of Theorem \ref{main}, it suffices to show that 
	 \begin{enumerate}
		\item Each $K_m$ is the boundary of a homotopy-ribbon surface $F_m'$ such that $\frak{g}_3(K)=g(F_m')$, and
		 \item The signature $\sigma(K_m)$ satisfies the equality $$|\sigma(K)|=\frac{2|\Xi_p(K, \rho)|}{p-1} -1$$ for $p=3$.
		 \end{enumerate}
		
	We first address (1).	Surfaces $F_m'$ realizing the lower bound on dihedral homotopy-ribbon genus for the knots $K_m$ are constructed in the proof of Proposition~\ref{covers}: we have shown $g(F_m')=m$ and $|\Xi_3(K_m)|=2m+1$, so
		
		$$\dfrac{|\Xi_p(K_m)|}{p-1}-\frac{1}{2}=m.$$
		 We note that, since the knots $K_m$ are two-bridge, each of them has a unique $3$-coloring (up to permuting the colors), so there is no distinction between $\frak{g}_p(K_m,\rho_m)$ and $\frak{g}_p(K_m)$. By construction, the surface $F_m'\subset B^4$ obtained by deleting a small neighborhood of the singularity $K_m$ is ribbon since $A_m\cup\overline{B}_m$ only bounds the cone on $K_m$, while the unlinks $B_m\cup\overline{C}_m$ and $C_m\cup\overline{A}_m$ bound standard unknotted disks in $B^4$.
		
We now address (2).	We will compute the signature $\sigma(K_m)$, and show it is equal to $2m=\frac{2|\Xi_p(K)|}{p-1} -1$.
	
	The signature of $K$ can be computed using the Goeritz matrix $G(K)$, the matrix of a quadratic form associated to a knot diagram via a checkerboard coloring, and hence a (not necessarily orientable) spanning surface;  this technique was introduced by Gordon and Litherland \cite{gordon1978signature}. The advantage of this technique is that the dimension of the Goeritz matrix associated to a projection of a knot may be much smaller than the dimension of the corresponding Seifert matrix; indeed, the dimension of $G(K_m)$ is 4 for all $m$.  
		
	Gordon and Litherland proved that the signature of a knot is equal the signature of the Goeritz matrix of a diagram of the knot plus a certain correction term: $\sigma(K)=\sigma(G(K))-\mu$. We start by computing the Goeritz matrix $G(K_m)$ and its signature.
	\begin{figure}[htbp]
	
	\includegraphics[width=4in]{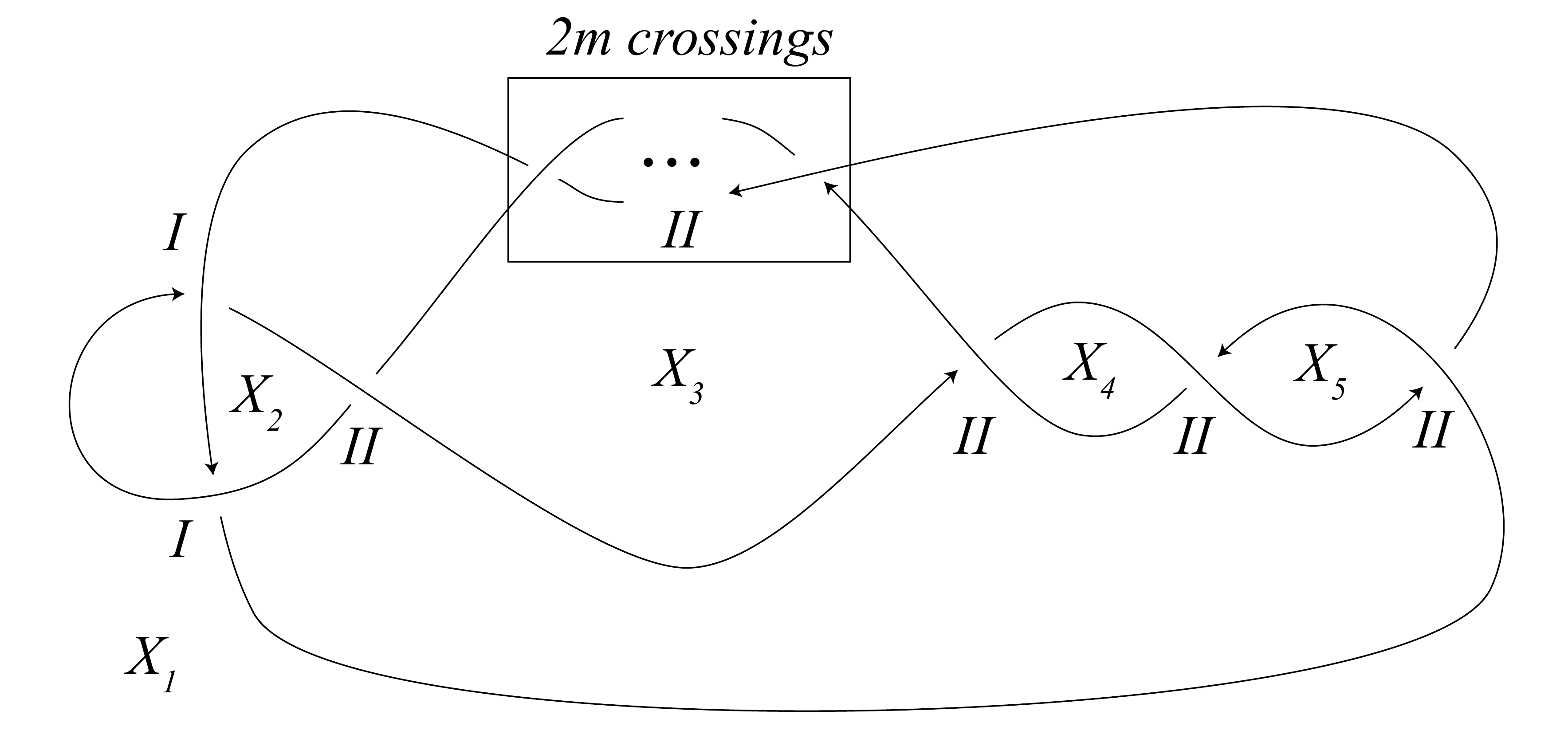}
	\caption{The ``white'' regions of a checkerboard coloring of $K_m$, labelled $X_1,X_2,\dots X_5$.}
	\label{singularityGoeritz.fig}
	\end{figure}
	
	One first computes the {\it unreduced} Goeritz matrix.  To do this, one chooses a checkerboard coloring of the knot diagram, and labels the ``white'' regions $X_1,\dots,X_k$.  Such a labelling for the $K_m$ is shown in Figure ~\ref{singularityGoeritz.fig}. The entries $g_{ij}$ of the unreduced Goeritz matrix are computed as follows:
	
	$$g_{ij}= \left\{ \begin{array}{ll} -\displaystyle{\sum} \eta(c)&i\neq j \text{ and }c \text{ a double point incident to regions } X_i \text{ and }X_j\\-\displaystyle{\sum}_{s\in \{1,\dots,k\}\setminus\{i\}} g_{is}& i=j \end{array}\right..$$
	
	The signs $\eta(c)$ are computed as in Figure ~\ref{Goeritzcrossings.fig}; shaded area corresponds to ``black'' regions of the checkerboard coloring.
 	\begin{figure}[htbp]
	
 	\includegraphics[width=4in]{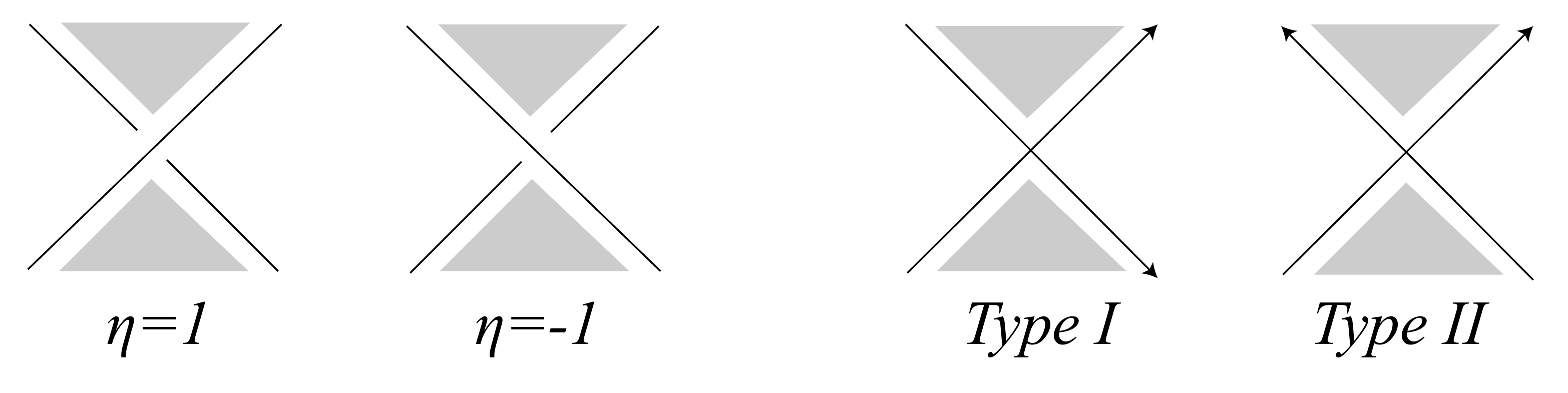}
 	\caption{Incidence numbers $\eta$ and Type $I$ and $II$ crossings.}
 	\label{Goeritzcrossings.fig}
 	\end{figure}

	The unreduced Goeritz matrix of $K_m$ is
$$G'(K_m)=\begin{pmatrix}-2m-3&-2&-2m&0&-1\\
-2&-3&-1&0&0\\
-2m &-1&-2m-2&-1&0\\
0&0&-1&-2&-1\\
-1&0&0&-1&-2
\end{pmatrix}.$$

The Goeritz matrix $G(K_m)$ is obtained by deleting the first row and column of $G'(K_m)$.  The characteristic polynomial of this matrix is
$$p_{G(K_m)}(\lambda)=(\lambda+3)(\lambda(\lambda+3)^2+2(\lambda+1)(\lambda+3)m+3).$$
Hence $\lambda=-3$ is an eigenvalue.  In addition, since $m\geq 0$, it is straightforward to verify that any root of the cubic factor must be negative (if $\lambda$ is nonnegative, the cubic, as written above, is a sum of three nonnegative terms).  Hence, $\sigma(G(K_m))=-4$. 

 The correction term $\mu(K)$ in Gordon and Litherland's formula for $\sigma(K)$ is computed as follows.  Each crossing $c$ of $K$ can be classified as type I or type II, as shown in Figure ~\ref{Goeritzcrossings.fig}.  Let $\mu(K)=\sum_c \eta(c)$ where the sum is taken over all type II crossings. 

The knot $K_m$ has $4+2m$ type II crossings, each of negative sign; see Figure ~\ref{singularityGoeritz.fig}. Hence $\sigma(K_m)=-4+(4+2m)=2m$. \end{proof}

\begin{proof}[Proof of Theorem~\ref{sum}] Given any non-negative integer $m$, our goal is to construct an infinite family of knots whose 3-dihedral and topological 4-genus are both equal to $m$. Let $K_m$ denote the knot given in Theorem~\ref{example} whose 3-dihedral and topological 4-genus equal $m$. We will prove that, given a nontrivial ribbon knot $\gamma$, the knot $K_m\#\gamma$ has the desired property. The theorem follows by taking repeated connect sums of $K_m$ with $\gamma$. 
	
Let $\gamma$ denote any ribbon knot and let $D\subset B^4$ be a ribbon disk with $\partial D=\gamma$. The knot $K_m\#\gamma$ has 3-dihedral genus and topological four-genus equal to $m$, {as we now show}.
It is clear that the smooth and topological four-genera of $K_m\#\gamma$ are both equal to $m$ since the knot is smoothly concordant to $K_m$. Next, remark that the given 3-coloring $\rho_m$ of $K_m$  induces a 3-coloring $\rho_\gamma$ of $K_m\#\gamma$ which restricts trivially to $\gamma$. Moreover, since $\rho_m$ extends over $F'_m$, $\rho_\gamma$ extends over the ribbon surface {$F_m'\natural D$, where $\natural$ denotes boundary connected sum}. 	Therefore, the ribbon 3-dihedral genus of $K_m\#\gamma$ is at most $m$. Since $g_4$ is a lower bound for the topological 3-dihedral genus, which in turn is a lower bound for the ribbon 3-dihedral genus, it follows that these genera are equal, as claimed. 
\end{proof}

Patricia Cahn\\
Smith College\\
{\it pcahn@smith.edu}

Alexandra Kjuchukova\\
Max Planck Institute for Mathematics--Bonn\\
{\it sashka@mpim-bonn.mpg.de}

\bibliographystyle{amsplain}
\bibliography{BrCovBib}

\end{document}